\documentclass[preprint,12pt,a4paper]{elsarticle}

\usepackage{amsmath,amssymb,amsthm,mathtools,mathrsfs}
\usepackage{booktabs,array}
\usepackage{enumitem}
\usepackage{microtype}
\usepackage[T1]{fontenc}

\journal{Journal of Functional Analysis}
\biboptions{sort&compress}
\newtheorem{theorem}{Theorem}[section]
\newtheorem{proposition}[theorem]{Proposition}
\newtheorem{lemma}[theorem]{Lemma}
\newtheorem{corollary}[theorem]{Corollary}
\newtheorem{definition}[theorem]{Definition}
\newtheorem{remark}[theorem]{Remark}
\newtheorem{example}[theorem]{Example}

\newcommand{\Pp}{\mathbb P}
\newcommand{\R}{\mathbb R}
\newcommand{\Z}{\mathbb Z}
\newcommand{\K}{\mathcal K}
\newcommand{\Hs}{\mathscr H}
\newcommand{\Ff}{\mathcal F}

\newcommand{\cP}{\mathcal P}
\newcommand{\prox}{\operatorname{prox}}
\newcommand{\Fix}{\operatorname{Fix}}
\newcommand{\Ran}{\operatorname{Ran}}
\newcommand{\Var}{\operatorname{Var}}
\newcommand{\id}{\operatorname{id}}
\newcommand{\esssup}{\operatorname*{ess\,sup}}
\newcommand{\notinf}{\not\rightsquigarrow}

\begin{document}

\begin{frontmatter}

\title{Fibre-to-Section Lifting over Measure-Preserving Dynamics: Variational Reciprocity and Memory-Reversal Rigidity}

\author{Lei Luo\corref{cor1}}
\ead{cslluo@njust.edu.cn}

\cortext[cor1]{Corresponding author.}

\address{School of Computer Science and Engineering,
Nanjing University of Science and Technology,
Nanjing 210094, China}

\begin{abstract}
\sloppy
Many nonlinear and variational problems are posed fibrewise over a probability space, whereas the dynamically relevant objects are measurable sections coupled by measure-preserving transformations. We study structural obstructions created by this fibre-to-section lifting. First, for a global continuously differentiable potential on a real Hilbert space, absence of influence between closed subspaces is reciprocal; for finite orthogonal decompositions this yields an additive decomposition over dependency components. Second, finite Koopman memory channels are identifiable exactly under a sharp orbit-separation condition. Resolving the classical Hilbert-space potentiality criterion channel by channel then gives a shifted-adjoint balance between forward and reverse memories. At the global finite-difference level, if a section map with arbitrary finite dynamical memory is the gradient of a global continuously differentiable potential, its genuinely active lags are invariant under time reversal. Thus globally variational memory is reversal-complete, and one-sided memory collapses to present-state locality. Continuous proximity operators inherit the same rigidity through their convex-potential representation; a discontinuous proximal selection shows that continuity cannot generally be removed. Finally, under pointwise nondegeneracy of two coherent branches, vanishing branch-pasting residual is equivalent to asymptotic invariance of the branch labels. For finitely generated ergodic probability-preserving actions, strong ergodicity is therefore the exact threshold for relative compactness of all branch-pasting approximate-zero sequences. We also record a strong-compactness criterion for full measurable-selection lifts and a nonconvex Wasserstein minimizing-step illustration.
\end{abstract}

\begin{keyword}
potential operators; memory reversal; proximity operators; Koopman operators
\end{keyword}

\end{frontmatter}

\section{Introduction}

\subsection{Fibre-to-section lifting}
Let $(\Omega,\Ff,\Pp)$ be a standard probability space and let $X$ be a metric or Hilbert space. A wide class of nonlinear problems is first defined separately at each $\omega\in\Omega$. A typical one-step relation is
\[
 y\in\Phi_\omega(x),
\]
or, in residual form,
\[
 G_\omega(x,y)=0.
\]
At the fibre level one may have compact solution sets, a variational principle, a proximal representation, or a well-behaved linearization. If a measure-preserving transformation $\theta$ couples the fibres, however, the dynamically relevant object is a measurable section $\Gamma:\Omega\to X$ satisfying
\begin{equation}\label{eq:intro-lift}
 G_\omega\bigl(\Gamma(\omega),\Gamma(\theta\omega)\bigr)=0.
\end{equation}
We call the passage from the fibre relation to the induced equation on a space of measurable sections \emph{fibre-to-section lifting}.

The guiding question of this paper is:
\begin{quote}
Which fibrewise analytic structures survive after the fibres are coupled into a stationary section space?
\end{quote}
The answer is controlled by a common mechanism that is easy to miss when compactness, potentiality and proximal representability are considered separately: global variational structure imposes reciprocity on influence, while dynamical lifting naturally creates oriented dependencies.

\subsection{Variational reciprocity as the organizing principle}
Let $\K$ be a real Hilbert space and $T:\K\to\K$. For closed subspaces $M,N\subset\K$, write
\[
 M\notinf_TN
\]
when perturbations in $M$ never alter the $N$-component of the output. If $T=\nabla\Psi$ for a global $C^1$ potential, we prove
\[
 M\notinf_TN\quad\Longleftrightarrow\quad N\notinf_TM.
\]
Neither convexity of $\Psi$ nor orthogonality of $M$ and $N$ is needed. This is a finite-difference consequence of global exactness, and we use it as an organizing principle rather than as a new abstract replacement for classical gradient symmetry.

For a finite orthogonal block decomposition, reciprocity makes the exact block-dependency graph undirected. Its connected components $K_1,\dots,K_r$ determine an additive splitting
\[
 \Psi(x)=C+\sum_{\alpha=1}^r\Psi_\alpha(P_{K_\alpha}x).
\]
For convex proximal maps, the same components split the generating penalty. Interaction graphs and partial separability have a long history in structured optimization \cite{GriewankToint1984}; the role of our block theorem is to derive the decomposition directly from global gradient dependence and then use it as input to the dynamical memory results below.

\subsection{Dynamical memories and two manifestations of reciprocity}
Let $H$ be a real separable Hilbert space and
\[
 \Hs=L^2(\Omega;H).
\]
For an invertible p.m.p.\ transformation $\theta$ define the Koopman memories
\[
 (U^ku)(\omega)=u(\theta^k\omega),\qquad k\in\Z.
\]
For a finite $F\subset\Z$, introduce
\begin{equation}\label{eq:sep-intro}
 (\mathrm{Sep}_F):\qquad
 \Pp\{\omega:\theta^k\omega=\theta^\ell\omega\}=0
 \quad(k\ne\ell,\ k,\ell\in F).
\end{equation}
We show that this is exactly the condition under which arbitrary bounded operator-valued channels are identifiable:
\[
 \sum_{k\in F}M_{A_k}U^k=0
 \quad\Longrightarrow\quad A_k=0\ \text{a.e.\ for every }k\in F.
\]
The converse is also true. If $\theta$ is aperiodic, $(\mathrm{Sep}_F)$ is automatic for every finite $F$. This concrete Koopman--multiplication representation should be distinguished from the regular Fourier representation in group-measure-space crossed products: coefficient uniqueness can fail here when orbit channels collide on a set of positive measure \cite{Takesaki2003}.

Channel identifiability yields the first, differential manifestation of memory reversal. For a regular finite-memory field
\[
 DF(u)=\sum_{k\in S}M_{A_k[u]}U^k,
\]
potentiality is equivalent to
\[
 A_k[u](\omega)=A_{-k}[u](\theta^k\omega)^*.
\]
Thus smooth variationality pairs every active derivative channel with its time reverse through a shifted adjoint.

The main global result is the finite-difference counterpart. Suppose
\[
 (Tu)(\omega)=t_\omega\bigl((u(\theta^s\omega))_{s\in S}\bigr)
\]
for a finite, not necessarily one-sided, memory set $S\subset\Z$. Under a finite orbit-separation condition, if $T=\nabla\Psi$ for a global $C^1$ potential on $\Hs$, then the active memory set satisfies
\[
 \boxed{\operatorname{Act}(T)=-\operatorname{Act}(T).}
\]
Hence global variational representability forces reversal-complete memory. The previous one-sided no-go theorem is the extremal case: if $S\cap(-S)\subset\{0\}$, all nonzero lags are inactive. Continuous proximity operators inherit this conclusion from the Gribonval--Nikolova convex-potential representation \cite{GribonvalNikolova2020}.

\subsection{Compactness and strong-ergodicity thresholds}
Two complementary lifting obstructions concern compactness. First, a full decomposable selection family over a nonatomic base is strongly compact only when the fibre correspondence is essentially single-valued. Thus fibrewise compactness does not survive arbitrary measurable pasting.

Second, suppose a residual admits two coherent branches $\xi_0,\xi_1$. Pasting the branches according to a label set $A$ produces an error only across the dynamical boundary $A\triangle\theta^{-1}A$. With pointwise nondegenerate cross-branch residuals, branch-pasting residual tends to zero if and only if the labels become asymptotically invariant. For a finitely generated ergodic p.m.p.\ group action this gives an exact threshold:
\[
 \begin{gathered}
 \text{strong ergodicity}\\
 \Longleftrightarrow\\
 \text{relative compactness of all branch-pasting approximate-zero sequences}.
 \end{gathered}
\]
Strong ergodicity itself is classical \cite{Schmidt1981}; our contribution is the nonlinear lifting consequence. For an aperiodic ergodic $\Z$-action, Rokhlin towers yield nontrivial asymptotically invariant labels and therefore noncompact approximate solutions.

\subsection{Contribution hierarchy}
The contributions are deliberately hierarchical rather than a list of parallel obstructions.
\begin{enumerate}[label=(C\arabic*)]
\item \textbf{Variational reciprocity as structural input.} We isolate a $C^1$ Hilbert-space finite-difference reciprocity principle and the associated finite block-component decomposition. This supplies the global support calculus used later; it is not presented as a replacement for classical exactness theory.
\item \textbf{Channel identifiability and smooth memory reversal.} We identify the exact finite orbit-separation condition for uniqueness of Koopman--multiplication channels and resolve the classical Helmholtz condition into a shifted-adjoint pairing of forward and reverse memories, including periodic aliasing.
\item \textbf{Global finite-memory reversal rigidity.} For arbitrary finite dynamical memory, we prove that the active lag set of a globally $C^1$-variational section map is invariant under $k\mapsto-k$. One-sided global variational rigidity and the corresponding continuous-proximal no-go statements follow as corollaries. This is the principal global rigidity theorem of the paper.
\item \textbf{Branch-pasting compactness threshold.} We identify vanishing branch-pasting residual exactly with asymptotic label invariance. Strong ergodicity then becomes the exact compactness threshold for this two-branch approximate-zero class.
\item \textbf{Complementary selection compactness.} We retain a sharp strong-compactness obstruction for full measurable-selection lifts and an explicit nonconvex Wasserstein illustration.
\end{enumerate}

\subsection{Relation to classical theories}
The abstract criterion $F=\nabla E\iff DF=DF^*$ on a convex Hilbert domain is classical potential-operator theory; see Vainberg \cite{Vainberg1964}. Forward/backward identities also occur in inverse variational problems with delayed or deviated arguments \cite{Popov1998,KolesnikovaPopovSavchin2007}, as well as in discrete Helmholtz conditions \cite{BourdinCresson2013}. The atomic/coatomic framework of Drakhlin, Ponosov and Stepanov \cite{DrakhlinPonosovStepanov2002} provides a measure-theoretic language for shifts and local memories. Our smooth theorem does not claim memory reversal itself as a new variational principle. It identifies the exact p.m.p.\ channel-separation mechanism and its operator-valued shifted-adjoint resolution.

Partially separable optimization has long used sparse interaction structures and additive element decompositions \cite{GriewankToint1984}. We therefore do not claim that an undirected interaction graph is new terminology. Our dependency theorem starts instead from input-output independence of a global Hilbert gradient field, obtains component decomposition without assuming a pre-existing partial-separability representation, and serves primarily as the finite-difference engine for the dynamical memory-reversal theorem.

Moreau's theory \cite{Moreau1965} and the characterization of Gribonval and Nikolova \cite{GribonvalNikolova2020} provide the proximal input. We do not propose a new characterization of proximity operators. Triangular cyclically monotone maps have a close finite-dimensional relative in recent transport work of De Lara and Ganassali \cite{DeLaraGanassali2025}; our one-sided proximal statements are presented as consequences of the more general global variational memory theorem.

The channel-identifiability calculation resembles Fourier-coefficient uniqueness in crossed products. Our operators, however, act in the concrete Koopman--multiplication representation on $L^2(\Omega;H)$, where orbit collisions cause genuine aliasing; see Takesaki \cite{Takesaki2003} for the operator-algebraic background. Finally, strong ergodicity and asymptotically invariant sets are classical \cite{Schmidt1981}. The new statement on that side is the nonlinear identification of asymptotic label invariance with branch-pasting residual. The strong-ergodicity compactness threshold follows from that identification.

\subsection{Organization}
Section 2 sets up section spaces, measurable multifunctions and dynamical memories. Section 3 records variational reciprocity and proves the finite block-component decomposition. Section 4 establishes channel identifiability and adjoint calculus. Section 5 gives the measure-dynamical Helmholtz theorem. Section 6 proves global finite-memory reversal rigidity and derives the one-sided and proximal no-go consequences, including a sharp discontinuous counterexample. Section 7 treats strong compactness of full selection lifts. Section 8 proves the branch-pasting/asymptotic-invariance theorem and the strong-ergodicity threshold. Section 9 gives variational and Wasserstein examples. Section 10 records scope and limitations. Technical material is collected in the appendices.

\section{Section lifts over measure-preserving dynamics}

Throughout, $(\Omega,\Ff,\Pp)$ is a standard probability space and $1\le p<\infty$ unless stated otherwise.

\subsection{Metric-valued Lp sections}
Let $(X,d)$ be Polish and fix $x_\circ\in X$. We write $L^p(\Omega;X)$ for equivalence classes of strongly measurable $u:\Omega\to X$ such that
\[
 \int_\Omega d(u(\omega),x_\circ)^p\,d\Pp(\omega)<\infty,
\]
with metric
\begin{equation}\label{eq:Dp}
 D_p(u,v)=\left(\int_\Omega d(u(\omega),v(\omega))^p\,d\Pp(\omega)\right)^{1/p}.
\end{equation}
When $X=H$ is a separable Hilbert space we use Bochner notation and set $\Hs=L^2(\Omega;H)$.

\subsection{Atomic and nonatomic components}
Modulo null sets, a standard probability space decomposes as
\begin{equation}\label{eq:atomic-decomp}
 \Omega=\Omega_{\rm na}\sqcup\bigsqcup_{j\ge1}A_j,
\end{equation}
where $\Omega_{\rm na}$ is nonatomic and the pairwise disjoint $A_j$ are atoms of positive measure.

\begin{lemma}\label{lem:atom-constant}
Let $A$ be an atom of positive measure and $X$ separable metric. Every measurable $u:A\to X$ is almost everywhere constant.
\end{lemma}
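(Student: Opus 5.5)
The plan is to use separability of $X$ to reduce to countably many yes/no questions, each of which is settled by the atom property. Recall that $A$ is an atom: every measurable $B\subset A$ has $\Pp(B)\in\{0,\Pp(A)\}$.

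\textbf{Step 1 (binary decisions).} Fix a countable dense set $\{x_n\}_{n\ge1}\subset X$ and consider all closed balls $\bar B(x_n,r)$ with $r\in\mathbb Q_{>0}$. Each preimage $u^{-1}(\bar B(x_n,r))$ is a measurable subset of $A$. Its measure is therefore either $0$ or $\Pp(A)$.

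\textbf{Step 2 (a shrinking sequence of full-measure balls).} For each $m\ge1$, the balls $\bar B(x_n,1/m)$, $n\ge1$, cover $X$. Countable subadditivity then forces $u^{-1}(\bar B(x_n,1/m))$ to have positive measure for some $n=n_m$, hence full measure $\Pp(A)$. Set
\[
 A'=\bigcap_{m\ge1}u^{-1}\bigl(\bar B(x_{n_m},1/m)\bigr).
\]
This is a countable intersection of full-measure subsets of $A$, so $\Pp(A\setminus A')=0$.

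\textbf{Step 3 (the image collapses to a point).} For $\omega,\omega'\in A'$ we have $d(u(\omega),u(\omega'))\le 2/m$ for every $m$, so $u$ is constant on $A'$. Since $A'$ has full measure in $A$ (in particular $A'\neq\emptyset$), this constant value $x^\ast$ satisfies $u=x^\ast$ a.e.\ on $A$. Equivalently, the centres $x_{n_m}$ form a Cauchy sequence converging to $u(\omega)$ for any $\omega\in A'$, so completeness of $X$ is never needed.

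\textbf{Where care is needed.} There is no serious obstacle, only two points to handle. The first is measurability of the preimages of closed balls, which follows from the (strong) measurability of $u$. The second is that separability is used exactly once, to make the covering in Step 2 countable. Without it, an atom could in principle carry a non-constant map; for standard spaces the atom is in fact a point mod null sets, but the argument above does not rely on that.
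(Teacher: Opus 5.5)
Your proof is correct and follows essentially the paper's argument: the paper pushes the normalized $\Pp|_A$ forward by $u$ and notes that, if the image measure were not a Dirac mass, separability would give a Borel set of image measure strictly between $0$ and $1$, contradicting atomicity. Your shrinking-ball intersection over a countable dense set is exactly the step that justifies this separability claim, carried out directly on $A$ rather than on the image measure, so you have written out in full what the paper leaves implicit.
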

\begin{proof}
Normalize $\Pp|_A$ and push it forward by $u$. If the image probability were not a Dirac mass, separability would provide a Borel set of image measure strictly between zero and one. Its inverse image would contradict atomicity.
\end{proof}

\subsection{Measurable multifunctions and full selection lifts}
Let $C:\Omega\rightrightarrows X$ be measurable, nonempty closed-valued. By the Castaing representation theorem \cite{CastaingValadier1977}, there are measurable selections $c_n$ such that
\begin{equation}\label{eq:castaing}
 C(\omega)=\overline{\{c_n(\omega):n\ge1\}}
 \quad\text{for a.e.\ }\omega.
\end{equation}
We call $C$ $p$-integrably bounded if
\[
 r_C(\omega):=\sup_{x\in C(\omega)}d(x,x_\circ)\in L^p(\Omega).
\]
Define
\begin{equation}\label{eq:selector-lift}
 S_p(C)=\{u\in L^p(\Omega;X):u(\omega)\in C(\omega)\ \text{a.e.}\}.
\end{equation}
A set $D\subset L^p(\Omega;X)$ is \emph{decomposable} if measurable pasting of any two of its elements along any measurable set remains in $D$. Every full selection lift $S_p(C)$ is decomposable.

\subsection{Dynamical memories}
Let $\theta:\Omega\to\Omega$ be invertible, bimeasurable and measure preserving. For $k\in\Z$ define
\begin{equation}\label{eq:Uk}
 (U^ku)(\omega)=u(\theta^k\omega).
\end{equation}
Each $U^k$ is an isometry on $L^p(\Omega;X)$; on $\Hs=L^2(\Omega;H)$,
\[
 (U^k)^*=U^{-k}.
\]
A finite-memory field has the schematic form
\begin{equation}\label{eq:finite-memory}
 (Fu)(\omega)=f_\omega\bigl((u(\theta^k\omega))_{k\in S}\bigr),
 \qquad S\subset\Z\ \text{finite}.
\end{equation}

\section{Variational reciprocity and dependency graphs}

This section contains the general structural principle used later in the dynamical no-go theorem. It does not rely on convexity or on proximal structure.

\subsection{Dependency between closed subspaces}
Let $\K$ be a real Hilbert space and let $M,N\subset\K$ be closed linear subspaces with orthogonal projections $P_M,P_N$.

\begin{definition}[Absence of influence]\label{def:influence}
For a map $T:\K\to\K$, we write
\[
 M\notinf_TN
\]
if
\begin{equation}\label{eq:no-influence}
 P_NT(x+h)=P_NT(x)
 \qquad\forall x\in\K,\ \forall h\in M.
\end{equation}
\end{definition}
No relation between $M$ and $N$ is assumed in this definition.

\begin{proposition}[Variational reciprocity]\label{prop:variational-reciprocity}
Let $\Psi\in C^1(\K)$ and $T=\nabla\Psi$. For any closed linear subspaces $M,N\subset\K$,
\begin{equation}\label{eq:reciprocity}
 M\notinf_TN\quad\Longleftrightarrow\quad N\notinf_TM.
\end{equation}
\end{proposition}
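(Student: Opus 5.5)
By symmetry of the statement it suffices to prove one implication, say $M\notinf_TN\Rightarrow N\notinf_TM$. The plan is to avoid second derivatives entirely, since $\Psi$ is only $C^1$, and to work with a mixed finite difference of $\Psi$ evaluated by the fundamental theorem of calculus along segments. For $x\in\K$, $h\in M$, $k\in N$ set
\[
 \Delta(x;h,k)=\Psi(x+h+k)-\Psi(x+h)-\Psi(x+k)+\Psi(x).
\]
Since $t\mapsto\Psi(y+tk)$ is $C^1$ with derivative $\langle T(y+tk),k\rangle$, we get
\[
 \Delta(x;h,k)=\int_0^1\bigl\langle T(x+h+tk)-T(x+tk),k\bigr\rangle\,dt .
\]
Because $k\in N$, the integrand equals $\langle P_NT(x+tk+h)-P_NT(x+tk),k\rangle$. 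The hypothesis $M\notinf_TN$ is applied at the base point $x+tk$ with increment $h\in M$, which kills it. Hence $\Delta(x;h,k)=0$ for all $x$, all $h\in M$ and all $k\in N$.

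Next expand the same quantity in the other order. Integrating along $h$ instead gives
\[
 \Delta(x;h,k)=\int_0^1\bigl\langle T(x+k+sh)-T(x+sh),h\bigr\rangle\,ds=0.
\]
We need this with $h$ replaced by $\lambda h$ for small $\lambda>0$, which is still in $M$. Dividing $\Delta(x;\lambda h,k)=0$ by $\lambda$ gives $\int_0^1\langle T(x+k+s\lambda h)-T(x+s\lambda h),h\rangle\,ds=0$. Letting $\lambda\downarrow0$ and using continuity of $T$ (uniform convergence of the integrand on $[0,1]$ by continuity at the two points) yields
\[
 \langle T(x+k)-T(x),h\rangle=0 .
\]
Since this holds for every $h\in M$, we obtain $P_MT(x+k)=P_MT(x)$ for all $x$ and all $k\in N$, which is $N\notinf_TM$.

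The only delicate step is the limit passage $\lambda\downarrow0$. Because no uniform continuity of $T$ on bounded sets is assumed in infinite dimensions, I will argue pointwise: the segments $\{x+s\lambda h\}$ and $\{x+k+s\lambda h\}$ shrink to single points, and continuity of $T$ at those two points controls the integrand uniformly in $s$. Alternatively one can note directly that the $\lambda$-dependence of $\Delta$ has derivative at $\lambda=0$ equal to $\langle T(x+k)-T(x),h\rangle$ by the $C^1$ hypothesis. Neither convexity nor any relation between $M$ and $N$ enters. Orthogonality is irrelevant because only the inner products with $h\in M$ and $k\in N$ are used, and these see exactly $P_M$ and $P_N$ of the output.
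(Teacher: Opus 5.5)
Your proof is correct and follows the paper's argument essentially step for step. The paper also shows that the mixed increment $\Delta(x;h,k)$ vanishes by integrating along $k$ and applying the hypothesis at the base point $x+tk$. It then replaces $h$ by a scalar multiple and differentiates at zero, which is exactly your $\lambda\downarrow0$ limit (or your alternative derivative at $\lambda=0$), yielding $\langle T(x+k)-T(x),h\rangle=0$ for all $h\in M$.
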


\begin{proof}
Assume $M\notinf_TN$. Fix $x\in\K$, $h\in M$ and $k\in N$, and define the mixed increment
\[
 \Delta_{h,k}\Psi(x)
 =\Psi(x+h+k)-\Psi(x+h)-\Psi(x+k)+\Psi(x).
\]
Integrating first in the $k$-direction gives
\[
 \Delta_{h,k}\Psi(x)
 =\int_0^1\langle T(x+h+tk)-T(x+tk),k\rangle\,dt.
\]
Since $k\in N$, the inner product only sees the $N$-projection of the difference. By $M\notinf_TN$ this projection vanishes, hence $\Delta_{h,k}\Psi(x)=0$.

Replace $h$ by $sh$, $s\in\R$. The same identity holds for every $s$. Differentiating at $s=0$ yields
\[
 \langle T(x+k)-T(x),h\rangle=0
 \qquad\forall h\in M.
\]
Thus $P_MT(x+k)=P_MT(x)$ for every $k\in N$, i.e.\ $N\notinf_TM$. Interchanging $M$ and $N$ gives the converse.
\end{proof}

\begin{remark}[What is and is not new]\label{rem:exactness}
For $C^2$ potentials, infinitesimal reciprocity is encoded by symmetry of the Hessian. Proposition~\ref{prop:variational-reciprocity} is a global finite-difference formulation requiring only $C^1$ regularity of the potential. We use it as an organizing structural principle rather than as a replacement for classical exactness theory.
\end{remark}

\subsection{Finite block-dependency graphs}
Let
\begin{equation}\label{eq:block-decomp}
 \K=H_1\oplus\cdots\oplus H_m
\end{equation}
be a finite orthogonal decomposition. For $i\ne j$, declare that $i\to j$ if $H_i\notinf_TH_j$ fails. This is the \emph{exact dependency digraph} of $T$ with respect to \eqref{eq:block-decomp}.

\begin{theorem}[Dependency-component decomposition]\label{thm:graph-classification}
Let $T=\nabla\Psi$ with $\Psi\in C^1(\K)$.
\begin{enumerate}[label=(\roman*)]
\item The exact dependency digraph is symmetric: $i\to j$ if and only if $j\to i$. Hence it may be identified with an undirected graph.
\item Let $C_1,\dots,C_r$ be its connected components and
\[
 K_\alpha=\bigoplus_{i\in C_\alpha}H_i.
\]
Then there are $C^1$ functions $\Psi_\alpha:K_\alpha\to\R$ and $C\in\R$ such that
\begin{equation}\label{eq:potential-component-decomp}
 \Psi(x)=C+\sum_{\alpha=1}^r\Psi_\alpha(P_{K_\alpha}x),
\end{equation}
and
\begin{equation}\label{eq:T-component-decomp}
 T(x)=\bigoplus_{\alpha=1}^r\nabla\Psi_\alpha(P_{K_\alpha}x).
\end{equation}
\end{enumerate}
\end{theorem}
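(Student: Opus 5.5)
Part (i) is an immediate application of Proposition~\ref{prop:variational-reciprocity} with $M=H_i$ and $N=H_j$: the absence of influence $H_i\notinf_TH_j$ holds if and only if $H_j\notinf_TH_i$ holds, so $i\to j$ fails exactly when $j\to i$ fails.

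For (ii), I would first lift the relation from blocks to components. Fix $\alpha\ne\beta$. For every $i\in C_\alpha$ and $j\in C_\beta$ we have $H_i\notinf_TH_j$, because otherwise $i$ and $j$ would be adjacent. I then claim that $K_\alpha\notinf_TK_\beta$. To see this, decompose $h\in K_\alpha$ as $h=\sum_{i\in C_\alpha}h_i$ and add the summands one at a time. Each step leaves $P_{H_j}T$ unchanged for every $j\in C_\beta$. Since $P_{K_\beta}=\sum_{j\in C_\beta}P_{H_j}$, the projection $P_{K_\beta}T$ is also unchanged. This is only a telescoping argument, because the definition quantifies over all base points $x$.

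Next comes the mixed-increment argument. In the proof of Proposition~\ref{prop:variational-reciprocity} we showed that $M\notinf_TN$ implies
\[
\Psi(x+h+k)-\Psi(x+h)-\Psi(x+k)+\Psi(x)=0
\]
for all $x$, all $h\in M$ and all $k\in N$. I would apply this iteratively with $x=0$. Write $x_\alpha=P_{K_\alpha}x$, so that $x=\sum_\alpha x_\alpha$. Set $y_{\beta}=\sum_{\gamma<\beta}x_\gamma$ and take $M=K_\beta$, $N=\bigoplus_{\gamma<\beta}K_\gamma$. The relation $K_\beta\notinf_T\bigoplus_{\gamma<\beta}K_\gamma$ follows from the pairwise statements by the same projection-sum argument. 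The vanishing mixed increment at base point $0$ then gives
\[
\Psi(y_\beta+x_\beta)=\Psi(y_\beta)+\Psi(x_\beta)-\Psi(0).
\]
Inducting on $\beta$ yields
\[
\Psi(x)=\sum_\alpha\Psi(x_\alpha)-(r-1)\Psi(0).
\]
So I set $\Psi_\alpha=\Psi|_{K_\alpha}$, which is $C^1$ on the closed subspace $K_\alpha$, and $C=-(r-1)\Psi(0)$. This proves \eqref{eq:potential-component-decomp}.

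Finally, \eqref{eq:T-component-decomp} follows by differentiation. The map $x\mapsto\Psi_\alpha(P_{K_\alpha}x)$ has gradient $P_{K_\alpha}^*\nabla\Psi_\alpha(P_{K_\alpha}x)$, which is $\nabla\Psi_\alpha(P_{K_\alpha}x)$ viewed as an element of $K_\alpha\subset\K$. Here $\nabla\Psi_\alpha$ is the gradient within $K_\alpha$, namely $P_{K_\alpha}T|_{K_\alpha}$. Summing over $\alpha$ gives the orthogonal direct sum. The main point needing care is the passage from pairwise block non-influence to non-influence between a component and the union of the others. I expect this to go through cleanly by the telescoping argument, since no convexity or orthogonality between $M$ and $N$ is used, and the rest is bookkeeping.
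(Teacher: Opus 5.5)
Your proof is correct, but it runs in the opposite direction from the paper's.

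\textbf{The paper's route (gradient level).} It first shows that $P_{K_\alpha}T(x)$ depends only on $P_{K_\alpha}x$, by changing the outside blocks one at a time, so $T$ is block diagonal across components. It then takes $\Psi_\alpha(z)=\Psi(z)-\Psi(0)$ on $K_\alpha$ and checks that $\widetilde\Psi=\Psi(0)+\sum_\alpha\Psi_\alpha\circ P_{K_\alpha}$ has gradient $T$. Finally it concludes $\Psi=\widetilde\Psi$ because a function with zero gradient on the connected space $\K$ is constant.

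\textbf{Your route (potential level).} You lift pairwise block non-influence to $K_\beta\notinf_T\bigoplus_{\gamma<\beta}K_\gamma$. You then reuse the vanishing mixed second difference from the proof of Proposition~\ref{prop:variational-reciprocity}, at base point $0$, to get additivity of $\Psi$ by induction. Only afterwards do you obtain \eqref{eq:T-component-decomp}, from the chain rule.

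\textbf{Comparison.} Your route gets the additive functional equation for $\Psi$ directly from finite differences. It avoids both verifying $\nabla\widetilde\Psi=T$ and the constancy argument on a connected space. The price is that it uses an intermediate identity from inside the reciprocity proof rather than only its statement. The paper's route yields the operator-level splitting $T=\bigoplus_\alpha T_\alpha$ first, which is the form used later in Theorem~\ref{thm:prox-components}. The two choices of $\Psi_\alpha$ and $C$ differ only by normalization: you use $\Psi|_{K_\alpha}$ with $C=-(r-1)\Psi(0)$, the paper uses $\Psi|_{K_\alpha}-\Psi(0)$ with $C=\Psi(0)$.
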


\begin{proof}
Part (i) is Proposition~\ref{prop:variational-reciprocity} applied to $H_i,H_j$.

For (ii), fix a component $C_\alpha$. Since there is no edge from any $H_j$ outside $C_\alpha$ to any $H_i$ inside $C_\alpha$, changing one outside block leaves $P_{K_\alpha}T$ unchanged. Changing the finitely many outside blocks successively shows that $P_{K_\alpha}T(x)$ depends only on $P_{K_\alpha}x$. Hence there is a map $T_\alpha:K_\alpha\to K_\alpha$ such that
\[
 P_{K_\alpha}T(x)=T_\alpha(P_{K_\alpha}x).
\]
Restrict $\Psi$ to $K_\alpha$ and define
\[
 \Psi_\alpha(z)=\Psi(z)-\Psi(0),\qquad z\in K_\alpha,
\]
where $z$ is embedded in $\K$ with all other component coordinates zero. Then
\[
 \nabla_{K_\alpha}\Psi_\alpha(z)=T_\alpha(z).
\]
The function
\[
 \widetilde\Psi(x)=\Psi(0)+\sum_{\alpha=1}^r\Psi_\alpha(P_{K_\alpha}x)
\]
has gradient $T=\nabla\Psi$. Since $\K$ is connected, $\Psi-\widetilde\Psi$ is constant; both agree at zero, so they are equal. This proves \eqref{eq:potential-component-decomp} and \eqref{eq:T-component-decomp}.

\end{proof}

\begin{remark}[Acyclic prescribed architectures]
As an immediate graph-theoretic consequence of reciprocity, if an exact block-dependency digraph of a global gradient field is directed and acyclic, then it has no cross-block edges: any edge $i\to j$ would force the two-cycle $i\leftrightarrow j$.
\end{remark}

\begin{corollary}[Triangular gradient fields collapse]\label{cor:triangular-gradient}
Suppose $T=\nabla\Psi$ is causal with respect to the order in \eqref{eq:block-decomp}, in the sense that the first $j$ output blocks depend only on the first $j$ input blocks for every $j$. Then $T$ is block diagonal.
\end{corollary}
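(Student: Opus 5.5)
The plan is to reduce the corollary to part (i) of Theorem~\ref{thm:graph-classification}, or equivalently to Proposition~\ref{prop:variational-reciprocity}. We translate causality into absence-of-influence statements, use reciprocity to reverse their direction, and combine the two families.

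\emph{Step 1: translate causality.} Fix $i<j$. The hypothesis says that for every $\ell<j$ the output blocks $1,\dots,\ell$ depend only on the input blocks $1,\dots,\ell$. Take $\ell=i$. Then perturbing the input in $H_j$ (with $j>i$) leaves $P_{H_i}T$ unchanged, so $H_j\notinf_T H_i$. In graph language, there is no edge $j\to i$ whenever $j>i$. To check this against Definition~\ref{def:influence}, we need
\[
P_{H_i}T(x+h)=P_{H_i}T(x)
\]
for all $x\in\K$ and $h\in H_j$. This holds because $x$ and $x+h$ have the same projections onto $H_1,\dots,H_i$.

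\emph{Step 2: apply reciprocity.} By Proposition~\ref{prop:variational-reciprocity}, $H_j\notinf_TH_i$ implies $H_i\notinf_TH_j$. So there is no edge $i\to j$ for $i<j$ either. The dependency digraph therefore has no edges at all, and the block output $P_{H_j}T(x)$ is unaffected by perturbations of any block $H_i$ with $i\ne j$.

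\emph{Step 3: conclude block diagonality.} Write any $x$ as $\sum_i x_i$ and change the off-$j$ blocks one at a time, as in the proof of Theorem~\ref{thm:graph-classification}(ii). This shows $P_{H_j}T(x)=T_j(x_j)$ for some map $T_j:H_j\to H_j$, so $T=\bigoplus_j T_j$, i.e.\ $T$ is block diagonal. Alternatively, part (ii) with singleton components $C_\alpha=\{\alpha\}$ gives this directly, together with the additive splitting $\Psi=C+\sum_j\Psi_j(P_{H_j}x)$.

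There is no real obstacle here. The only point needing care is Step 1: the causality hypothesis must be read correctly as the statement ``later input blocks do not influence earlier output blocks''. That reading is exactly $H_j\notinf_TH_i$ for $j>i$, in the sense of Definition~\ref{def:influence}.
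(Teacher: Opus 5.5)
Your proof is correct and follows the paper's own argument: causality gives $H_j\notinf_TH_i$ for $j>i$, Proposition~\ref{prop:variational-reciprocity} reverses this, and the resulting edgeless dependency graph forces block diagonality. Your Step~3 spells out the successive block-change argument from Theorem~\ref{thm:graph-classification}(ii), which the paper leaves implicit.
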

\begin{proof}
Causality rules out dependence of an earlier output block on any later input block. Proposition~\ref{prop:variational-reciprocity} rules out the reciprocal dependence as well. Applying this to each pair of distinct blocks leaves only the diagonal dependencies.
\end{proof}

\subsection{Convex proximal component decomposition}
Recall $\Gamma_0(\K)$ denotes the proper lower-semicontinuous convex functions $\Phi:\K\to\R\cup\{+\infty\}$.

\begin{theorem}[Dependency components of convex proximal maps]\label{thm:prox-components}
Let $\Phi\in\Gamma_0(\K)$ and $T=\prox_\Phi$. Let $C_1,\dots,C_r$ be the connected components of the exact dependency graph of $T$, with associated orthogonal sums $K_\alpha$. Then there exist $\phi_\alpha\in\Gamma_0(K_\alpha)$ and $C\in\R$ such that
\begin{equation}\label{eq:penalty-component-decomp}
 \Phi(x)=C+\sum_{\alpha=1}^r\phi_\alpha(P_{K_\alpha}x).
\end{equation}
Conversely, any decomposition of the form \eqref{eq:penalty-component-decomp} makes $\prox_\Phi$ block diagonal across the $K_\alpha$.
\end{theorem}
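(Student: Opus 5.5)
The plan is to pass through the Moreau envelope, which turns the proximity operator into a genuine $C^1$ gradient field; Theorem~\ref{thm:graph-classification} then does the splitting. Let $\Phi\in\Gamma_0(\K)$ and define
\[
 m(x)=\min_y\Bigl\{\Phi(y)+\tfrac12\|x-y\|^2\Bigr\},\qquad \Psi(x)=\tfrac12\|x\|^2-m(x).
\]
By Moreau's theory \cite{Moreau1965}, $m$ is convex, finite and Fr\'echet differentiable with $\nabla m=\id-\prox_\Phi$, and this gradient is $1$-Lipschitz. Hence $\Psi$ is convex and $C^1$, with $\nabla\Psi=\prox_\Phi=T$. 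We may therefore apply Theorem~\ref{thm:graph-classification} to $\Psi$ and the block decomposition \eqref{eq:block-decomp}. This gives
\[
 \Psi(x)=C'+\sum_\alpha\Psi_\alpha(P_{K_\alpha}x),\qquad T(x)=\bigoplus_\alpha\nabla\Psi_\alpha(P_{K_\alpha}x).
\]
Each $\Psi_\alpha$ is the restriction of $\Psi$ to $K_\alpha$ minus a constant, so it is convex, $C^1$, and has $1$-Lipschitz gradient. Call this gradient $T_\alpha=\nabla\Psi_\alpha$.

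Next I would recover the penalty from each block. Set $\psi_\alpha=\Psi_\alpha^*$, the Fenchel conjugate on $K_\alpha$, and define
\[
 \phi_\alpha=\psi_\alpha-\tfrac12\|\cdot\|^2 .
\]
Recall the standard identity: if $\Phi\in\Gamma_0$, then $\Psi^*=\Phi+\tfrac12\|\cdot\|^2$. So globally $\Phi=\Psi^*-\tfrac12\|\cdot\|^2$. Conjugation is additive across an orthogonal direct sum of independent variables, and a constant $C'$ becomes $-C'$. Hence
\[
 \Psi^*(y)=-C'+\sum_\alpha\Psi_\alpha^*(P_{K_\alpha}y).
\]
The norm also splits, $\|y\|^2=\sum_\alpha\|P_{K_\alpha}y\|^2$. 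Combining these gives \eqref{eq:penalty-component-decomp} with $C=-C'$.

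It remains to check that $\phi_\alpha\in\Gamma_0(K_\alpha)$, and this is the step I expect to need care. Lower semicontinuity and properness are clear from conjugation, but convexity of $\psi_\alpha-\tfrac12\|\cdot\|^2$ is not automatic. I would argue it as follows. Since $\nabla\Psi_\alpha$ is $1$-Lipschitz and $\Psi_\alpha$ is convex, the function $\tfrac12\|\cdot\|^2-\Psi_\alpha$ is convex; this follows from the Baillon--Haddad theorem, or directly from firm nonexpansiveness of $T_\alpha$, which is inherited from $T$ by restriction. Convexity of $\tfrac12\|\cdot\|^2-\Psi_\alpha$ is equivalent, by the standard duality between $1$-smoothness and $1$-strong convexity, to $\Psi_\alpha^*$ being $1$-strongly convex. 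That is exactly convexity of $\phi_\alpha$. A shortcut is also available: $\phi_\alpha$ is a slice of $\Phi$ restricted to $K_\alpha$ with the other coordinates fixed at a point of the domain, up to constants, so it inherits membership in $\Gamma_0$ directly. Properness in this shortcut requires choosing the base point $x_0\in\operatorname{dom}\Phi$. I would present the slice argument, using the already-established additive form of $\Phi$ to justify it.

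For the converse, suppose $\Phi(y)=C+\sum_\alpha\phi_\alpha(P_{K_\alpha}y)$. Then the minimization defining $\prox_\Phi(x)$ separates:
\[
 \Phi(y)+\tfrac12\|x-y\|^2=C+\sum_\alpha\Bigl[\phi_\alpha(y_\alpha)+\tfrac12\|x_\alpha-y_\alpha\|^2\Bigr],
\]
with $y_\alpha=P_{K_\alpha}y$ and $x_\alpha=P_{K_\alpha}x$. Each summand involves only its own block, so the unique minimizer is
\[
 \prox_\Phi(x)=\bigoplus_\alpha\prox_{\phi_\alpha}(x_\alpha).
\]
Thus $\prox_\Phi$ is block diagonal across the $K_\alpha$.
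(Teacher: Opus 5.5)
Your proof is correct. Its first half coincides with the paper's: the Moreau envelope gives a convex $C^1$ potential $\Psi$ with $\nabla\Psi=T$, and Theorem~\ref{thm:graph-classification} then splits it. Your converse, by separation of the proximal minimization, is also the paper's.

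The genuine difference is how the block penalties are produced. The paper works at the operator level, in three steps:
\begin{enumerate}[label=(\alph*)]
\item each block $T_\alpha$ is firmly nonexpansive and a convex gradient, so Moreau's characterization supplies some $\phi_\alpha\in\Gamma_0(K_\alpha)$ with $T_\alpha=\prox_{\phi_\alpha}$;
\item it checks $\prox_{\widetilde\Phi}=\prox_\Phi$ for $\widetilde\Phi=\sum_\alpha\phi_\alpha\circ P_{K_\alpha}$;
\item a separate identification step (equal envelope gradients, $e_\Phi^*=\Phi^*+\tfrac12\|\cdot\|^2$, biconjugation) gives $\Phi=\widetilde\Phi+C$.
\end{enumerate}
You instead recognize $\Psi=(\Phi+\tfrac12\|\cdot\|^2)^*$ and conjugate the already-split potential once. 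That single step yields both the decomposition and the explicit formula $\phi_\alpha=\Psi_\alpha^*-\tfrac12\|\cdot\|^2$.

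The price is that $\phi_\alpha\in\Gamma_0(K_\alpha)$ must be checked by hand rather than read off from Moreau's theorem. Your slice argument does this cleanly. It makes the Baillon--Haddad detour unnecessary, and even firm nonexpansiveness of the blocks. Your route therefore rests only on Fenchel--Moreau and separability of conjugation over an orthogonal sum, with no appeal to Moreau's characterization of proximity operators.

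When you write it up, state explicitly the two facts the argument uses:
\begin{enumerate}[label=(\roman*)]
\item the supremum defining $\Psi^*$ splits into a sum of blockwise suprema because every $\Psi_\alpha$ is finite on all of $K_\alpha$;
\item each $\phi_\beta$ never takes the value $-\infty$, since $\Psi_\beta^*>-\infty$. This is what forces $\phi_\beta(P_{K_\beta}x_0)<\infty$ for $x_0\in\operatorname{dom}\Phi$, so the constant in your slice formula is finite.
\end{enumerate}
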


\begin{proof}
By Moreau's theory, $T$ is firmly nonexpansive and is the gradient of a convex $C^1$ potential. Theorem~\ref{thm:graph-classification} gives
\[
 T=\bigoplus_{\alpha=1}^rT_\alpha
\]
with $T_\alpha:K_\alpha\to K_\alpha$. Firm nonexpansiveness passes to each block, and each $T_\alpha$ is itself the gradient of a convex $C^1$ potential. Moreau's characterization therefore yields $\phi_\alpha\in\Gamma_0(K_\alpha)$ with $T_\alpha=\prox_{\phi_\alpha}$.

Set $\widetilde\Phi(x)=\sum_\alpha\phi_\alpha(P_{K_\alpha}x)$. Orthogonal separation of the minimization gives
\[
 \prox_{\widetilde\Phi}=\bigoplus_\alpha\prox_{\phi_\alpha}=T=\prox_\Phi.
\]
Let $e_\Phi,e_{\widetilde\Phi}$ be the unit-parameter Moreau envelopes. Since
\[
 \nabla e_\Phi=I-\prox_\Phi=I-\prox_{\widetilde\Phi}=\nabla e_{\widetilde\Phi},
\]
the envelopes differ by a constant. Using
\[
 e_\Phi^*=\Phi^*+\tfrac12\|\cdot\|^2
\]
and Fenchel conjugation gives $\Phi=\widetilde\Phi+C$ after renaming the constant. The converse follows directly by separability of the proximal minimization.
\end{proof}

\begin{corollary}[Triangular convex proximal maps]\label{cor:triangular-prox}
For $\Phi\in\Gamma_0(H_1\oplus\cdots\oplus H_m)$, if $\prox_\Phi$ is triangular with respect to the ordered blocks, then
\[
 \Phi(x_1,\dots,x_m)=C+\sum_{i=1}^m\phi_i(x_i)
\]
for some $\phi_i\in\Gamma_0(H_i)$. The converse is immediate.
\end{corollary}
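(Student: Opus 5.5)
The plan is to derive the corollary from Theorem~\ref{thm:prox-components} by showing that, for a triangular $\prox_\Phi$, the exact dependency graph has no edges at all. Its connected components are then the singletons $\{1\},\dots,\{m\}$, so each $K_\alpha$ is a single block $H_i$.

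First, read triangularity in the sense of Corollary~\ref{cor:triangular-gradient}: for every $j$, the first $j$ output blocks of $T=\prox_\Phi$ depend only on the first $j$ input blocks. Take a pair $i<j$. A perturbation $h\in H_j$ leaves the first $i$ input blocks unchanged, so it cannot change $P_{H_i}T$. Hence $H_j\notinf_T H_i$, and there is no edge $j\to i$.

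Second, supply a global $C^1$ potential for $T$. By Moreau's theory (already used in the proof of Theorem~\ref{thm:prox-components}), $\prox_\Phi=\nabla\Psi$ with
\[
 \Psi=\tfrac12\|\cdot\|^2-e_\Phi,
\]
which is convex and $C^1$. Proposition~\ref{prop:variational-reciprocity}, or equivalently part (i) of Theorem~\ref{thm:graph-classification}, then gives $H_i\notinf_T H_j$ as well. So there is no edge $i\to j$ either, and the graph is edgeless. This step is simply Corollary~\ref{cor:triangular-gradient} applied to $\Psi$.

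Third, apply Theorem~\ref{thm:prox-components} with $K_i=H_i$ and $P_{K_i}x=x_i$. This yields $\phi_i\in\Gamma_0(H_i)$ and $C\in\R$ with
\[
 \Phi(x_1,\dots,x_m)=C+\sum_{i=1}^m\phi_i(x_i).
\]

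For the converse, suppose $\Phi$ has this separable form. The proximal minimization splits over the orthogonal blocks, so
\[
 \prox_\Phi(x)=\bigl(\prox_{\phi_1}(x_1),\dots,\prox_{\phi_m}(x_m)\bigr).
\]
This map is block diagonal and in particular triangular; this is the converse half of Theorem~\ref{thm:prox-components}. The only point needing care is that the additive constant $C$ does not affect the proximity operator.

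There is no real obstacle. The one thing to check is that the causal notion of triangularity matches the no-influence relation of Definition~\ref{def:influence} exactly. It does, because that relation quantifies over all base points $x$ and all perturbations $h$ in the later block.
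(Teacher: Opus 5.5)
Your proposal is correct and follows essentially the route the paper intends. Moreau's convex $C^1$ potential combined with reciprocity (i.e.\ Corollary~\ref{cor:triangular-gradient}) makes the block-dependency graph edgeless, so Theorem~\ref{thm:prox-components} applied with singleton components yields the separable penalty, and the converse is the separability half of that same theorem.
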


\section{Identifiability and adjoint calculus for dynamical memories}

We return to $\Hs=L^2(\Omega;H)$ with $H$ real separable and $\theta$ invertible p.m.p.

\subsection{Strong-operator measurable coefficients}
Since $\mathcal L(H)$ need not be norm-separable, let $\mathcal M_s^\infty(\Omega;\mathcal L(H))$ denote the fields $A$ for which $\omega\mapsto A(\omega)v$ is strongly measurable for every $v\in H$ and
\[
 \esssup_\omega\|A(\omega)\|<\infty.
\]
Such $A$ defines a bounded multiplication operator $M_A$ on $\Hs$.

\begin{lemma}\label{lem:adjoint-measurable}
If $A\in\mathcal M_s^\infty(\Omega;\mathcal L(H))$, then $A^*$ belongs to the same class.
\end{lemma}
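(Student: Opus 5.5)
The plan is to reduce strong measurability of $A^*$ to weak measurability and then invoke Pettis' theorem. The norm bound is immediate, since $\|A(\omega)^*\|=\|A(\omega)\|$ pointwise. The same essential supremum therefore bounds $A^*$, and the only issue is measurability of $\omega\mapsto A(\omega)^*v$ for fixed $v\in H$.

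Fix $v\in H$. For any $w\in H$ we have
\[
 \langle A(\omega)^*v,w\rangle=\langle v,A(\omega)w\rangle .
\]
The map $\omega\mapsto A(\omega)w$ is strongly measurable by hypothesis, and composition with the continuous functional $\langle v,\cdot\rangle$ preserves measurability. Hence $\omega\mapsto\langle A(\omega)^*v,w\rangle$ is measurable for every $w$, so $\omega\mapsto A(\omega)^*v$ is weakly measurable.

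Since $H$ is separable, the Pettis measurability theorem upgrades weak measurability to strong measurability. The separably-valued hypothesis is automatic here, because the range lies in $H$ itself. If one prefers to avoid citing Pettis, a direct argument works as well. Fix an orthonormal basis $(e_n)$ of $H$ and write
\[
 A(\omega)^*v=\lim_{N\to\infty}\sum_{n\le N}\langle A(\omega)^*v,e_n\rangle e_n .
\]
This is a pointwise norm limit of finite sums of measurable scalar functions times fixed vectors, hence of simple-function limits. It follows that $\omega\mapsto A(\omega)^*v$ is strongly measurable.

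The only point needing care, rather than a genuine obstacle, is that $\mathcal L(H)$ is not norm-separable. For this reason we never attempt to measure $\omega\mapsto A(\omega)^*$ as an operator-valued map in norm. We work only with the vectors $A(\omega)^*v$, and this is precisely what the class $\mathcal M_s^\infty$ requires. One should also note that $A$ is only defined up to null sets, and that changing $A$ on a null set changes $A^*$ only on that null set, so the class of $A^*$ is well defined. This completes the plan.
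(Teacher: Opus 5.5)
Your proposal is correct and follows the paper's proof. Both establish weak measurability of $A(\cdot)^*v$ through $\langle A(\omega)^*v,w\rangle=\langle v,A(\omega)w\rangle$, use separability of $H$ and Pettis' theorem to obtain strong measurability, and note that $\|A(\omega)^*\|=\|A(\omega)\|$, so the essential bound is unchanged. Your orthonormal-basis argument is an optional elementary substitute for citing Pettis.
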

\begin{proof}
For fixed $v,w\in H$, $\langle A(\omega)^*v,w\rangle=\langle v,A(\omega)w\rangle$ is measurable. Thus $A(\cdot)^*v$ is weakly measurable. Separability and Pettis' theorem give strong measurability; the essential norm bound is unchanged.
\end{proof}

\subsection{Finite-channel separation}
For finite $F\subset\Z$ define $(\mathrm{Sep}_F)$ by \eqref{eq:sep-intro}.

\begin{theorem}[Identifiability of dynamical memory channels]\label{thm:identifiability}
Let $F\subset\Z$ be finite. The following are equivalent:
\begin{enumerate}[label=(\roman*)]
\item $\theta$ satisfies $(\mathrm{Sep}_F)$;
\item for every family $A_k\in\mathcal M_s^\infty(\Omega;\mathcal L(H))$,
\begin{equation}\label{eq:channel-sum}
 \sum_{k\in F}M_{A_k}U^k=0
\end{equation}
implies $A_k=0$ a.e.\ for every $k\in F$.
\end{enumerate}
\end{theorem}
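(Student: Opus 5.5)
We prove the two implications separately. In both, the key observation is the pointwise form of \eqref{eq:channel-sum}: for $u\in\Hs$,
\[
 \Bigl(\sum_{k\in F}M_{A_k}U^ku\Bigr)(\omega)=\sum_{k\in F}A_k(\omega)\,u(\theta^k\omega).
\]
So the task is to use test sections $u$ to separate the channels whenever the points $\theta^k\omega$, $k\in F$, are distinct.

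\emph{(i)$\Rightarrow$(ii).} Assume $(\mathrm{Sep}_F)$ and that the sum vanishes. Fix $k_0\in F$. We want a measurable set $B$ of positive measure on which $\theta^{k_0}\omega\in E$ while $\theta^{k}\omega\notin E$ for $k\ne k_0$, for a suitable set $E$. Testing with $u=\mathbf 1_E v$, $v\in H$, then gives $A_{k_0}(\omega)v=0$ on $B$.

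To obtain such sets, use that $(\Omega,\Ff,\Pp)$ is standard, so there is a countable separating family $\{E_n\}$ of measurable sets. For a.e.\ $\omega$ the points $\theta^k\omega$, $k\in F$, are pairwise distinct by $(\mathrm{Sep}_F)$, since $F$ is finite. Hence for each pair $k\ne k_0$ some $E_n$ contains $\theta^{k_0}\omega$ but not $\theta^k\omega$. Intersecting finitely many such sets gives a set $E$, drawn from the countable algebra generated by $\{E_n\}$, that isolates $\theta^{k_0}\omega$ from the other $\theta^k\omega$.

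Let $E^{(j)}$ enumerate this countable algebra, and set
\[
 B_j=\{\omega:\theta^{k_0}\omega\in E^{(j)},\ \theta^k\omega\notin E^{(j)}\ \forall k\ne k_0\}.
\]
Almost every $\omega$ lies in some $B_j$. Testing with $u=\mathbf 1_{E^{(j)}}v$ and multiplying by $\mathbf 1_{B_j}$ shows that $A_{k_0}(\omega)v=0$ a.e.\ on $B_j$. Taking the union over $j$, and then letting $v$ range over a countable dense subset of $H$, yields $A_{k_0}=0$ a.e. Measurability of every set involved follows from the bimeasurability of $\theta$.

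\emph{(ii)$\Rightarrow$(i).} Argue by contraposition. Suppose $B=\{\omega:\theta^k\omega=\theta^\ell\omega\}$ has positive measure for some $k\ne\ell$ in $F$. Define $A_k=\mathbf 1_B I$ and $A_\ell=-\mathbf 1_B I$, with all other channels zero. These fields lie in $\mathcal M_s^\infty(\Omega;\mathcal L(H))$, and $B$ is measurable as the preimage of the diagonal, which is measurable in a standard space. Then
\[
 (M_{A_k}U^k+M_{A_\ell}U^\ell)u(\omega)=\mathbf 1_B(\omega)\bigl(u(\theta^k\omega)-u(\theta^\ell\omega)\bigr)=0,
\]
so \eqref{eq:channel-sum} holds although $A_k\ne0$ on a set of positive measure. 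This contradicts (ii).

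The main obstacle is the isolation step in (i). Measurability of the $B_j$ and the countable covering require care. The natural tool is a Borel isomorphism of $\Omega$ with a Polish space: a countable base then separates points, so the pairwise-distinct $\theta^k\omega$ can be separated by finitely many basic sets. Any issue coming from atoms is absorbed, because distinctness is given almost everywhere directly by $(\mathrm{Sep}_F)$.
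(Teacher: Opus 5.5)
Your proof is correct and follows essentially the same route as the paper. For (i)$\Rightarrow$(ii), both use test sections $\mathbf 1_E v$ supported on sets that separate the finitely many orbit points $\theta^k\omega$, obtained from a countable base of the standard Borel structure, together with a countable cover and a countable dense subset of $H$; for (ii)$\Rightarrow$(i), both use the same counterexample $A_k=\mathbf 1_B I$, $A_\ell=-\mathbf 1_B I$. The only cosmetic difference is that the paper builds one cover by sets $E$ with $\{\theta^kE\}_{k\in F}$ pairwise disjoint and tests with $\mathbf 1_{\theta^jE}v$, whereas you isolate one channel $k_0$ at a time through the preimage sets $B_j$. These are equivalent, since you take the isolating sets from the generated algebra, where complements are available.
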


\begin{proof}
Assume $(\mathrm{Sep}_F)$. On a full-measure standard Borel subset the finite orbit points $\theta^k\omega$, $k\in F$, are pairwise distinct. A countable basis yields a countable measurable cover by sets $E$ such that $\theta^kE$, $k\in F$, are pairwise disjoint. Fix $j\in F$, one such $E$ and $v\in H$. With $h=1_{\theta^jE}v$, for $\omega\in E$ all channels in \eqref{eq:channel-sum} vanish except $j$, hence the value is $A_j(\omega)v$. A countable dense subset of $H$ and bounded linearity give $A_j=0$ on $E$. The countable cover and arbitrariness of $j$ prove uniqueness.

Conversely, if $(\mathrm{Sep}_F)$ fails, there are $k\ne\ell$ and a positive-measure set $B$ on which $\theta^k\omega=\theta^\ell\omega$. Set $A_k=1_BI_H$, $A_\ell=-1_BI_H$ and all other coefficients zero. The operator sum vanishes although the coefficients do not.
\end{proof}

\begin{remark}[Aperiodic bases]\label{rem:aperiodic-sep}
If $\theta$ is aperiodic, then $\Pp(\Fix(\theta^q))=0$ for every $q\ne0$. Hence $(\mathrm{Sep}_F)$ holds automatically for every finite $F\subset\Z$. No ergodicity assumption is needed for this observation.
\end{remark}

\begin{remark}[Periodic aliasing]\label{rem:aliasing}
If $\theta^q=\id$ on an invariant component, then $U^{k+q}=U^k$ there. Individual exponents are identifiable only modulo $q$; the natural coefficients are the residue-class aggregates
\[
 B_r=\sum_{k\equiv r\,({\rm mod}\ q)}A_k.
\]
\end{remark}

\subsection{Adjoint calculus}
\begin{lemma}[Adjoint of one channel]\label{lem:channel-adjoint}
For $A\in\mathcal M_s^\infty(\Omega;\mathcal L(H))$ and $k\in\Z$,
\begin{equation}\label{eq:channel-adjoint}
 (M_AU^k)^*=M_{A^*\circ\theta^{-k}}U^{-k}.
\end{equation}
\end{lemma}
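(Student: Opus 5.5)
The plan is to verify \eqref{eq:channel-adjoint} directly from the definition of the adjoint on $\Hs=L^2(\Omega;H)$. I will pair both sides against arbitrary sections and move the Koopman shift from one factor to the other using invariance of $\Pp$ under $\theta$. First, the right-hand side has to be a well-defined bounded operator. By Lemma~\ref{lem:adjoint-measurable}, $A^*$ lies in $\mathcal M_s^\infty(\Omega;\mathcal L(H))$. Since $\theta^{-k}$ is bimeasurable and measure preserving, composing with it preserves strong measurability of $\omega\mapsto A(\theta^{-k}\omega)^*v$. It also preserves the essential supremum bound, because null sets are carried to null sets. So $M_{A^*\circ\theta^{-k}}$ is a bounded multiplication operator, and $U^{-k}$ is an isometry.

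Next, fix $u,v\in\Hs$ and compute
\[
\langle M_AU^ku,v\rangle_{\Hs}=\int_\Omega\bigl\langle A(\omega)u(\theta^k\omega),v(\omega)\bigr\rangle\,d\Pp(\omega)
=\int_\Omega\bigl\langle u(\theta^k\omega),A(\omega)^*v(\omega)\bigr\rangle\,d\Pp(\omega).
\]
The second equality uses the pointwise adjoint in $H$. Measurability of the integrand follows from strong measurability of $A^*v$, which Lemma~\ref{lem:adjoint-measurable} provides together with a simple-function approximation of $v$. Integrability follows from Cauchy--Schwarz and the essential norm bound.

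Then substitute $\eta=\theta^k\omega$, which is legitimate because $\theta^k$ is invertible and preserves $\Pp$. The integral becomes
\[
\int_\Omega\bigl\langle u(\eta),A(\theta^{-k}\eta)^*v(\theta^{-k}\eta)\bigr\rangle\,d\Pp(\eta)
=\bigl\langle u,M_{A^*\circ\theta^{-k}}U^{-k}v\bigr\rangle_{\Hs}.
\]
Since $u$ and $v$ are arbitrary and both operators are bounded, the two operators coincide, which is \eqref{eq:channel-adjoint}.

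The only delicate point is the measurability bookkeeping. Without norm-separability of $\mathcal L(H)$, one must check that $\omega\mapsto A(\omega)^*v(\omega)$ is strongly measurable for a general section $v$, and not only for constant vectors. I would handle this by approximating $v$ by countably-valued simple sections, applying Lemma~\ref{lem:adjoint-measurable} on each piece, and passing to the pointwise limit using the uniform bound on $\|A(\omega)^*\|$. The change of variables itself is routine.
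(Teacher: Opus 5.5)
Your proposal is correct and follows the paper's proof. Both pair with arbitrary $u,v\in\Hs$, move $A(\omega)$ across the inner product pointwise, and substitute $\eta=\theta^k\omega$ using measure preservation. Your extra bookkeeping fills in details the paper leaves implicit: the strong measurability of $\omega\mapsto A(\omega)^*v(\omega)$ via simple-section approximation, and the well-definedness of $M_{A^*\circ\theta^{-k}}$.
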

\begin{proof}
For $u,v\in\Hs$, measure preservation and $\eta=\theta^k\omega$ give
\[
 \begin{aligned}
 \langle M_AU^ku,v\rangle
 &=\int\langle A(\omega)u(\theta^k\omega),v(\omega)\rangle\,d\Pp(\omega)\\
 &=\int\langle u(\eta),A(\theta^{-k}\eta)^*v(\theta^{-k}\eta)\rangle\,d\Pp(\eta).
 \end{aligned}
\]
which is \eqref{eq:channel-adjoint}.
\end{proof}

\begin{corollary}\label{cor:sum-adjoint}
If $T=\sum_{k\in F}M_{A_k}U^k$, then
\[
 T^*=\sum_{j\in-F}M_{A_{-j}^*\circ\theta^j}U^j.
\]
\end{corollary}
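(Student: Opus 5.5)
The plan is to reduce everything to Lemma~\ref{lem:channel-adjoint}, using that taking adjoints is additive on bounded operators. Since $F$ is finite and each $M_{A_k}U^k$ is bounded on $\Hs$, we have
\[
 T^*=\sum_{k\in F}(M_{A_k}U^k)^*=\sum_{k\in F}M_{A_k^*\circ\theta^{-k}}U^{-k},
\]
where the second equality is Lemma~\ref{lem:channel-adjoint} applied channel by channel.

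Next we reindex the sum by $j=-k$. As $k$ runs over $F$, $j$ runs over $-F$, and $A_k^*\circ\theta^{-k}=A_{-j}^*\circ\theta^{j}$. The sum becomes $\sum_{j\in-F}M_{A_{-j}^*\circ\theta^j}U^j$, which is the claimed formula.

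The only point needing care is that each new coefficient $A_{-j}^*\circ\theta^j$ again lies in $\mathcal M_s^\infty(\Omega;\mathcal L(H))$, so that the multiplication operators make sense.
\begin{itemize}
\item By Lemma~\ref{lem:adjoint-measurable}, $A_{-j}^*$ belongs to this class.
\item Composition with the bimeasurable map $\theta^j$ preserves strong measurability of $\omega\mapsto A(\theta^j\omega)v$.
\item Because $\theta^j$ is measure preserving, it maps null sets to null sets, so the essential supremum of the norm is unchanged.
\end{itemize}
There is no genuine obstacle here. The statement is a bookkeeping corollary, and it does not require $(\mathrm{Sep}_F)$, since no uniqueness of coefficients is claimed.
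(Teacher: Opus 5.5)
Your proposal is correct and is exactly the argument the paper leaves implicit: additivity of the adjoint over the finite sum, Lemma~\ref{lem:channel-adjoint} applied channel by channel, and the reindexing $j=-k$. Your side check that $A_{-j}^*\circ\theta^j$ stays in $\mathcal M_s^\infty(\Omega;\mathcal L(H))$ is accurate, and so is your remark that $(\mathrm{Sep}_F)$ is not needed.
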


\subsection{Regular finite-memory fields}
Let $S\subset\Z$ be finite and $O\subset\Hs$ open.

\begin{definition}\label{def:regular-memory}
A $C^1$ map $F:O\to\Hs$ is a \emph{regular $S$-memory field} if
\begin{equation}\label{eq:regular-derivative}
 DF(u)=\sum_{k\in S}M_{A_k[u]}U^k,
\end{equation}
where $A_k[u]\in\mathcal M_s^\infty(\Omega;\mathcal L(H))$. We set $A_k[u]=0$ for $k\notin S$.
\end{definition}

\section{Measure-dynamical Helmholtz rigidity}

\subsection{Classical potentiality criterion}
\begin{proposition}[Potentiality criterion]\label{prop:Vainberg}
Let $O$ be convex and open in a real Hilbert space and let $F\in C^1(O;\Hs)$. The following are equivalent:
\begin{enumerate}[label=(\roman*)]
\item there exists $E\in C^2(O;\R)$ with $F=\nabla E$;
\item $DF(u)$ is self-adjoint for every $u\in O$.
\end{enumerate}
If they hold, then for fixed $u_0\in O$,
\begin{equation}\label{eq:radial-potential}
 E(u)=C+\int_0^1\langle F(u_0+t(u-u_0)),u-u_0\rangle\,dt
\end{equation}
is a potential.
\end{proposition}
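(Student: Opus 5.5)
The plan is to prove the two implications separately and then check that the radial formula \eqref{eq:radial-potential} gives a potential. Throughout, the target space is identified with the ambient Hilbert space, so that $F:O\to\Hs$ and $DF(u)\in\mathcal L(\Hs)$.

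\emph{From (i) to (ii).} Suppose $E\in C^2(O;\R)$ and $F=\nabla E$. Then $DF(u)=D^2E(u)$, identified with an operator through $\langle D^2E(u)h,k\rangle=D^2E(u)[h,k]$. The second Fréchet derivative of a $C^2$ function is a symmetric bilinear form; this is Schwarz's theorem, and one can obtain it by applying the mixed finite-difference argument of Proposition~\ref{prop:variational-reciprocity} to the function $s,t\mapsto E(u+sh+tk)$ of two real variables. Since $DF(u)$ is bounded, symmetry of the form gives $DF(u)=DF(u)^*$.

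\emph{From (ii) to (i).} Fix $u_0\in O$ and define $E$ by \eqref{eq:radial-potential}. Convexity of $O$ guarantees that every segment $u_0+t(u-u_0)$, $t\in[0,1]$, lies in $O$, so $E$ is well defined. The plan is to differentiate under the integral sign; continuity of $F$ and $DF$ on the compact segment, together with openness of $O$, justifies this. For a direction $h$, write $w=u-u_0$. Then
\[
 DE(u)h=\int_0^1\Bigl(\langle F(u_0+tw),h\rangle+t\langle DF(u_0+tw)h,w\rangle\Bigr)dt .
\]
Self-adjointness lets us rewrite the second term as $t\langle DF(u_0+tw)w,h\rangle$. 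This equals $t\frac{d}{dt}\langle F(u_0+tw),h\rangle$. The integrand is therefore $\frac{d}{dt}\bigl(t\langle F(u_0+tw),h\rangle\bigr)$, and the integral equals $\langle F(u),h\rangle$. Hence $\nabla E=F$. Because $F$ is $C^1$, $E$ is $C^2$. Any other potential differs from $E$ by a constant, since $O$ is connected; this accounts for the constant $C$.

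\emph{Main obstacle.} The only delicate point is justifying differentiation under the integral in the infinite-dimensional setting. I would handle it with a uniform estimate. By continuity of $DF$ and compactness of the segment, $DF$ is bounded in norm on a neighbourhood of the segment that stays inside $O$. Writing the difference quotient of the integrand with the mean-value inequality then gives a remainder that is $o(\|h\|)$ uniformly in $t\in[0,1]$, and dominated convergence finishes the step.
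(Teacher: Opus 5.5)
Your proposal is correct and takes the same route as the paper. Necessity comes from symmetry of the Hessian, and sufficiency comes from differentiating the radial formula and using self-adjointness of $DF$ to recognize the integrand as $\frac{d}{dt}\bigl(t\langle F(u_0+tw),h\rangle\bigr)$. You add the details the paper delegates to Vainberg: the exact-derivative identity, and the justification for differentiating under the integral, where dominated convergence gives the G\^ateaux derivative and its continuity in $u$ upgrades it to a Fr\'echet derivative.
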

\begin{proof}
Necessity is symmetry of the Hessian. Conversely, differentiation of \eqref{eq:radial-potential}, followed by self-adjointness of $DF$, yields $DE(u)h=\langle F(u),h\rangle$. This is classical potential-operator theory \cite{Vainberg1964}.
\end{proof}

\subsection{Channelwise Helmholtz theorem}
\begin{theorem}[Measure-dynamical Helmholtz rigidity]\label{thm:helmholtz}
Let $S\subset\Z$ be finite, $R=S\cup(-S)$, and let $F:O\subset\Hs\to\Hs$ be a regular $S$-memory field on a convex open set. Assume $(\mathrm{Sep}_R)$. Then the following are equivalent:
\begin{enumerate}[label=(\roman*)]
\item $F$ is potential;
\item $DF(u)$ is self-adjoint for every $u\in O$;
\item for every $u\in O$, $k\in R$, and a.e.\ $\omega$,
\begin{equation}\label{eq:shifted-adjoint}
 A_k[u](\omega)=A_{-k}[u](\theta^k\omega)^*.
\end{equation}
\end{enumerate}
When these conditions hold, a potential is given by \eqref{eq:radial-potential}.
\end{theorem}

\begin{proof}
The equivalence of (i) and (ii) is Proposition~\ref{prop:Vainberg}. By Corollary~\ref{cor:sum-adjoint},
\[
 DF(u)^*=\sum_{k\in R}M_{A_{-k}[u]^*\circ\theta^k}U^k.
\]
Hence
\[
 DF(u)-DF(u)^*
 =\sum_{k\in R}M_{A_k[u]-A_{-k}[u]^*\circ\theta^k}U^k.
\]
Theorem~\ref{thm:identifiability} shows that this operator vanishes exactly when every coefficient vanishes, which is \eqref{eq:shifted-adjoint}.
\end{proof}

\begin{corollary}[One-sided smooth-memory rigidity]\label{cor:one-sided-smooth}
Assume the hypotheses of Theorem~\ref{thm:helmholtz} and $S\cap(-S)\subset\{0\}$. If $F$ is potential, then
\[
 A_k[u]=0\quad\text{a.e.\ for every }u\in O,\ k\in S\setminus\{0\}.
\]
Thus every genuinely active one-sided derivative channel obstructs a global section potential.
\end{corollary}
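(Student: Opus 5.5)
The plan is to derive Corollary~\ref{cor:one-sided-smooth} directly from the shifted-adjoint condition \eqref{eq:shifted-adjoint} in Theorem~\ref{thm:helmholtz}, using the convention $A_k[u]=0$ for $k\notin S$ from Definition~\ref{def:regular-memory}. Since $F$ is potential and $(\mathrm{Sep}_R)$ holds with $R=S\cup(-S)$, Theorem~\ref{thm:helmholtz} gives, for every $u\in O$ and every $k\in R$, the identity $A_k[u](\omega)=A_{-k}[u](\theta^k\omega)^*$ for a.e.\ $\omega$.

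First, fix $k\in S\setminus\{0\}$. Then $k\in R$, so the identity applies. By the one-sided hypothesis $S\cap(-S)\subset\{0\}$, we have $-k\notin S$, since otherwise $k\in S\cap(-S)$ with $k\neq 0$. Hence $A_{-k}[u]=0$ a.e.\ by the convention, and therefore $A_{-k}[u]^*\circ\theta^k=0$ a.e. Here we use that $\theta^k$ is measure preserving, so it maps null sets to null sets, and Lemma~\ref{lem:adjoint-measurable} ensures this composite is a legitimate element of $\mathcal M_s^\infty(\Omega;\mathcal L(H))$. We conclude that $A_k[u]=0$ a.e.

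The only delicate point is to check that Theorem~\ref{thm:helmholtz} is genuinely applied with index $k\in R$ whose partner $-k$ lies outside $S$. This is exactly where the identifiability of Theorem~\ref{thm:identifiability} over the enlarged set $R$ is needed: the channel $U^{-k}$ appears in $DF(u)^*$ with coefficient $A_k[u]^*\circ\theta^{-k}$ but does not appear in $DF(u)$. This is already built into the proof of Theorem~\ref{thm:helmholtz}, so no separate obstacle is expected.

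The final interpretive sentence follows immediately. If some channel $k\in S\setminus\{0\}$ satisfies $A_k[u]\neq0$ on a set of positive measure for some $u$, the conclusion just proved is contradicted, so $F$ cannot be potential.
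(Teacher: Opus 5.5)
Your proposal is correct and is essentially the paper's argument: the paper states this corollary without a separate proof, as an immediate consequence of Theorem~\ref{thm:helmholtz}. For $k\in S\setminus\{0\}$ the one-sided hypothesis gives $-k\notin S$, so $A_{-k}[u]=0$ by convention and \eqref{eq:shifted-adjoint} forces $A_k[u]=0$ a.e.; your remark that $DF(u)^*$ carries the coefficient $A_k[u]^*\circ\theta^{-k}$ on the channel $U^{-k}$ correctly explains why separation over $R=S\cup(-S)$, not just $S$, is required.
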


\begin{example}[Forward implicit residual]\label{ex:forward-residual}
Let $g:H\to H$ be $C^1$ and set
\[
 G(u)=\frac{Uu-u}{\tau}+g(Uu).
\]
Then
\[
 DG(u)=M_{\tau^{-1}I+Dg(Uu)}U-\tau^{-1}I.
\]
Under $(\mathrm{Sep}_{\{-1,0,1\}})$, potentiality forces $Dg(Uu)=-\tau^{-1}I$. If constant sections are admissible on a connected fibre domain, then $g(y)=-\tau^{-1}y+b$ and the forward coupling cancels. Hence a genuinely nonlinear forward implicit residual may be fibrewise variational while failing to be section-potential.
\end{example}

\subsection{A positive two-sided example}
Let
\begin{equation}\label{eq:edge-energy}
 E(u)=\int_\Omega\bigl[V_\omega(u(\omega))+W_\omega(u(\omega),u(\theta\omega))\bigr]d\Pp(\omega)
\end{equation}
under standard differentiability and growth assumptions. Then
\[
 \nabla E(u)(\omega)=\nabla V_\omega(u(\omega))
 +D_1W_\omega(u(\omega),u(\theta\omega))
 +D_2W_{\theta^{-1}\omega}(u(\theta^{-1}\omega),u(\omega)).
\]
Thus a global edge action automatically creates forward and backward channels. Equation \eqref{eq:shifted-adjoint} is the infinitesimal compatibility law behind this pairing.

\subsection{Periodic aliasing}
On a period-$q$ component, the Helmholtz law acts on the residue-class aggregates of Remark~\ref{rem:aliasing}. Thus $(\mathrm{Sep}_R)$ is an identifiability assumption rather than a cosmetic freeness condition.

\section{Global memory-reversal rigidity}

The smooth theorem of Section 5 detects time reversal through the derivative coefficients. We now prove a global finite-difference classification that uses only the input-output dependence of the section operator itself.

\subsection{Finite-memory maps and active lags}
Let $S\subset\Z$ be finite. A continuous map $T:\Hs\to\Hs$ is a \emph{finite $S$-memory map} if it admits a Carath\'eodory representation
\begin{equation}\label{eq:finite-S-rep}
 (Tu)(\omega)=t_\omega\bigl((u(\theta^s\omega))_{s\in S}\bigr),
\end{equation}
where $t_\omega:H^S\to H$ is continuous for a.e.\ $\omega$ and $\omega\mapsto t_\omega(z)$ is measurable for each fixed $z\in H^S$.

For $k\in S$, call the $k$th lag \emph{inactive} if, outside one null set, $t_\omega$ is independent of its $k$th coordinate. Otherwise $k$ is \emph{active}. Equivalently, fixing a countable dense set $D\subset H$, activity means that for some two tuples in $D^S$ differing only in the $k$th coordinate, the corresponding outputs differ on a set of positive measure. We write
\[
 \operatorname{Act}(T)=\{k\in S:k\text{ is active}\}.
\]
For finite $S$, put
\begin{equation}\label{eq:F_S}
 F_S:=\{0\}\cup S\cup(S+S),
 \qquad S+S:=\{s+t:s,t\in S\}.
\end{equation}

\begin{theorem}[Global memory-reversal rigidity]\label{thm:memory-reversal}
Assume $(\mathrm{Sep}_{F_S})$. Let $T:\Hs\to\Hs$ be a finite $S$-memory map. If there exists $\Psi\in C^1(\Hs)$ such that
\[
 T=\nabla\Psi,
\]
then
\begin{equation}\label{eq:active-reversal}
 \boxed{\operatorname{Act}(T)=-\operatorname{Act}(T).}
\end{equation}
In particular, every genuinely active lag $k$ forces the reverse lag $-k$ to be present and genuinely active.
\end{theorem}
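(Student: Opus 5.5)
The plan is to use Proposition~\ref{prop:variational-reciprocity} with carefully chosen closed subspaces of $\Hs$ built from localized sections. By symmetry of the statement under $k\mapsto-k$, it suffices to show that $k\in\operatorname{Act}(T)$ and $k\neq0$ imply $-k\in\operatorname{Act}(T)$; in particular $-k$ must then lie in $S$. Suppose, for contradiction, that $k$ is active but $-k$ is inactive or absent from $S$.

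First we localize. By $(\mathrm{Sep}_{F_S})$, as in the proof of Theorem~\ref{thm:identifiability}, we take a countable measurable cover of $\Omega$ by sets $E$ whose translates $\theta^{j}E$, $j\in F_S$, are pairwise disjoint. Activity of $k$ gives tuples in $D^S$ differing only in coordinate $k$ whose outputs differ on a set of positive measure, so this set meets some $E$ in positive measure. Shrinking $E$ to that intersection, we set
\[
M=\{1_{\theta^kE}v\,w:\ w\in L^\infty,\ v\in H\}^{-}=L^2(\theta^kE;H),\qquad N=L^2(E;H).
\]
These are closed subspaces of $\Hs$, and they are disjointly supported because $0,k\in F_S$.

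The next step is the claim $N\notinf_T M$. A perturbation $h\in N$ is supported on $E$. The output $(Tu)(\omega)$ at a point $\omega\in\theta^kE$ reads $u$ at the points $\theta^{s}\omega\in\theta^{s+k}E$ for $s\in S$. Such a point lies in $E$ only if $s+k=0$, which uses disjointness of the translates over $\{0\}\cup(S+S)$, once $k$ is written as a lag in $S$. The only way $h$ can therefore affect $P_MTu$ is through the lag $s=-k$, which is inactive or absent, so $P_MT(u+h)=P_MTu$. The precise index bookkeeping $\theta^{s}\theta^{k}E\cap E=\emptyset$ for $s\in S\setminus\{-k\}$ is exactly where the set $F_S$ is needed. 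Because the translate sets are only pairwise disjoint modulo the indices in $F_S$, I would check membership of $s+k$ and $0$ in $F_S$ carefully; if the raw statement requires $\theta^{s+k}E\cap E=\emptyset$, this is guaranteed by disjointness of $\theta^{s+k}E$ and $\theta^0E$.

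Proposition~\ref{prop:variational-reciprocity} then yields $M\notinf_T N$. That is, for $\omega\in E$, the output $(Tu)(\omega)=t_\omega((u(\theta^s\omega))_s)$ is unchanged under arbitrary $L^2$ modifications of $u$ on $\theta^kE$. The $k$th coordinate $u(\theta^k\omega)$ sees exactly that region. The other coordinates see $\theta^sE$, which is disjoint from $\theta^kE$ for $s\neq k$ by Sep on $S\subset F_S$, so they stay fixed. Choosing $u$ equal to the two tuples from $D^S$, placed on the respective regions $\theta^sE$, gives two sections differing only on $\theta^kE$. Their outputs must coincide a.e.\ on $E$, which contradicts the choice of $E$. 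Continuity of $t_\omega$ together with the density of $D$ handles the passage between tuples and sections.

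The main obstacle is the second step: making sure that modifications in $N$ affect only the intended coordinate of the outputs on $\theta^kE$, and that the sections used remain in $L^2$. The latter is fine because they are constant tuples on the disjoint pieces $\theta^sE$ and zero elsewhere, and only the values on $E\cup\theta^kE$ matter.
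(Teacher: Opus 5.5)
Your proposal is correct and follows essentially the same route as the paper. You use a countable $F_S$-separating cover and the subspaces $N=L^2(E;H)$, $M=L^2(\theta^kE;H)$. You observe that $S+S\subset F_S$ leaves $s=-k$ as the only channel from $E$ to $\theta^kE$, which gives $N\notinf_TM$; you then apply Proposition~\ref{prop:variational-reciprocity} and test with piecewise-constant sections on the disjoint sets $\theta^sE$. The only difference is cosmetic: you argue by contradiction after shrinking $E$ to where a witnessing pair from $D^S$ separates the outputs, whereas the paper proves inactivity on every set of the cover and then globalizes. Your version folds the paper's final continuity-and-covering step into the $D^S$ characterization of activity.
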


\begin{proof}
For $k=0$ the claim is tautological. Fix $k\in S\setminus\{0\}$. We prove the contrapositive implication
\[
 -k\notin\operatorname{Act}(T)\quad\Longrightarrow\quad
 k\notin\operatorname{Act}(T),
\]
interpreting an absent lag $-k\notin S$ as inactive.

By the same finite standard-Borel separation argument used in Theorem~\ref{thm:identifiability}, $(\mathrm{Sep}_{F_S})$ yields a countable family of measurable sets $E$ covering $\Omega$ modulo null sets such that
\begin{equation}\label{eq:FS-separated}
 \{\theta^aE:a\in F_S\}
\end{equation}
are pairwise disjoint. Fix one such $E$ and set
\[
 N=L^2(E;H),\qquad M=L^2(\theta^kE;H).
\]

Suppose that the reverse lag $-k$ is inactive. If an input is perturbed by $h\in N$ and $\eta\in\theta^kE$, then the $s$th memory argument of the output at $\eta$ lies in
\[
 \theta^s(\theta^kE)=\theta^{k+s}E.
\]
When $k+s\ne0$, the exponent $k+s$ belongs to $S+S\subset F_S$, so \eqref{eq:FS-separated} makes $\theta^{k+s}E$ disjoint from $E$. The only possible channel through which a perturbation supported on $E$ could reach output on $\theta^kE$ is therefore $s=-k$; by hypothesis that coordinate is absent or inactive. Hence
\[
 P_MT(u+h)=P_MT(u)
 \qquad\forall u\in\Hs,\quad h\in N,
\]
so $N\notinf_TM$.

The variational reciprocity principle, Proposition~\ref{prop:variational-reciprocity}, gives $M\notinf_TN$. Thus perturbing the input on $\theta^kE$ never changes the output on $E$.

It remains to translate this support-local independence into inactivity of the $k$th pointwise variable. Because the sets $\theta^sE$, $s\in S$, are pairwise disjoint, simple sections can prescribe independently on these sets any finite tuple of values from a fixed countable dense subset $D\subset H$. For two tuples $z,z'\in D^S$ differing only in coordinate $k$, choose two such sections that agree off $\theta^kE$. The relation $M\notinf_TN$ shows, outside a null set independent of the countably many choices,
\[
 t_\omega(z)=t_\omega(z')\qquad\text{for a.e.\ }\omega\in E.
\]
Continuity of $t_\omega$ extends this equality from $D^S$ to all tuples in $H^S$. Hence lag $k$ is inactive on $E$. Since the separating family covers $\Omega$ modulo null sets, $k$ is inactive globally.

We have proved that inactivity of $-k$ implies inactivity of $k$, hence activity of $k$ implies activity of $-k$. Therefore $\operatorname{Act}(T)=-\operatorname{Act}(T)$.
\end{proof}

\begin{remark}[Smooth and global reversal]\label{rem:smooth-global-reversal}
Theorem~\ref{thm:helmholtz} gives the differential law
\[
 A_k[u](\omega)=A_{-k}[u](\theta^k\omega)^*.
\]
Theorem~\ref{thm:memory-reversal} is its global support-level counterpart: it does not identify coefficients, but it forces the set of genuinely active finite-memory variables itself to be closed under time reversal. No derivative of $T$ is used.
\end{remark}

\begin{corollary}[One-sided global variational rigidity]\label{cor:one-sided-variational}
Assume the hypotheses of Theorem~\ref{thm:memory-reversal} and
\[
 S\cap(-S)\subset\{0\}.
\]
Then every nonzero lag is inactive. In particular, for $S=\{0,1,\ldots,m\}$ and $(\mathrm{Sep}_{\{0,1,\ldots,2m\}})$ there is a measurable map $s_\omega:H\to H$ such that
\begin{equation}\label{eq:local-T}
 (Tu)(\omega)=s_\omega(u(\omega))
 \qquad\text{a.e.\ for every }u\in\Hs.
\end{equation}
\end{corollary}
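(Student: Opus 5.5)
The corollary follows directly from Theorem~\ref{thm:memory-reversal}; the only extra work is a bookkeeping check of the separation hypothesis and of how the representation \eqref{eq:finite-S-rep} collapses.

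\textbf{Step 1 (one-sided sets).} Let $k\in S\setminus\{0\}$. Since $S\cap(-S)\subset\{0\}$, we have $-k\notin S$. By the convention in the proof of Theorem~\ref{thm:memory-reversal}, an absent lag counts as inactive, so $-k\notin\operatorname{Act}(T)$. Because $\operatorname{Act}(T)\subset S$ and $\operatorname{Act}(T)=-\operatorname{Act}(T)$, this gives $k\notin\operatorname{Act}(T)$. Hence every nonzero lag is inactive.

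\textbf{Step 2 (the causal window).} For $S=\{0,1,\dots,m\}$ we compute $S+S=\{0,\dots,2m\}$. Therefore $F_S=\{0,1,\dots,2m\}$, and $(\mathrm{Sep}_{\{0,\dots,2m\}})$ is exactly $(\mathrm{Sep}_{F_S})$. Clearly $S\cap(-S)=\{0\}$, so Step~1 applies and lags $1,\dots,m$ are all inactive.

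\textbf{Step 3 (defining $s_\omega$).} Each lag $k\ne0$ is inactive off a null set $Z_k$. Let $Z=\bigcup_{k=1}^m Z_k$, which is still null. For $\omega\notin Z$, define
\[
 s_\omega(x)=t_\omega(x,0,\dots,0).
\]
Off $Z$, the function $t_\omega$ does not depend on coordinates $1,\dots,m$. Hence $t_\omega\bigl((u(\theta^s\omega))_{s\in S}\bigr)=s_\omega(u(\omega))$ for every $u$ and every $\omega\notin Z$, which is \eqref{eq:local-T}.

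The Carath\'eodory conditions pass directly to $s_\omega$: it is continuous in $x$ for a.e.\ $\omega$ and measurable in $\omega$ for fixed $x$. Thus $(\omega,x)\mapsto s_\omega(x)$ is jointly measurable, which is the sense in which $s_\omega$ is a measurable map.

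\textbf{Main subtlety.} The only delicate point is that "inactive" must hold outside a \emph{single} null set uniformly in the remaining coordinates, so that \eqref{eq:local-T} holds simultaneously for all $u$. This is built into the definition of an inactive lag, and a finite union of null sets handles the finitely many lags.
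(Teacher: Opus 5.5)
Your proof is correct and follows the paper's argument. Symmetry of $\operatorname{Act}(T)$, together with $\operatorname{Act}(T)\subset S$ and $S\cap(-S)\subset\{0\}$, rules out every nonzero lag, and $s_\omega(z)=t_\omega(z,0,\dots,0)$ is then defined off a common null set. The paper gets that common null set by recalling the countable-dense-set argument, whereas you take it directly from the definition of inactivity (a finite union of the $Z_k$, then removing coordinates one at a time); this is the same step, and \eqref{eq:local-T} then holds off $Z$ and the $u$-dependent null set of \eqref{eq:finite-S-rep}, as claimed.
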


\begin{proof}
If a nonzero $k\in S$ were active, Theorem~\ref{thm:memory-reversal} would make $-k$ active and hence an element of $S$, contradicting $S\cap(-S)\subset\{0\}$. For $S=\{0,\ldots,m\}$, all positive coordinates are therefore inactive. A countable-dense-set argument as in the proof above gives a common null set on which $t_\omega$ depends only on the zeroth coordinate; set $s_\omega(z)=t_\omega(z,0,\ldots,0)$.
\end{proof}

\subsection{Continuous proximity operators as consequences}
We call $T:\K\to\K$ a \emph{continuous proximity operator} if $T$ is continuous and there is a proper, possibly nonconvex, penalty $\phi$ such that
\[
 T(x)\in\arg\min_y\left\{\frac12\|x-y\|^2+\phi(y)\right\}
 \qquad\forall x\in\K.
\]
Gribonval and Nikolova \cite{GribonvalNikolova2020} show that every such $T$ is the gradient of a convex $C^1$ potential.

\begin{corollary}[Finite-memory proximal reversal]\label{cor:prox-memory-reversal}
Under $(\mathrm{Sep}_{F_S})$, every continuous proximity operator on $\Hs$ that admits a finite $S$-memory representation satisfies
\[
 \operatorname{Act}(T)=-\operatorname{Act}(T).
\]
Consequently, if $S\cap(-S)\subset\{0\}$, all nonzero lags are inactive.
\end{corollary}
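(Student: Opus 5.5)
The corollary is a direct reduction to Theorem~\ref{thm:memory-reversal}, so the plan is to verify that a continuous proximity operator on $\Hs$ meets its hypotheses. Take $\K=\Hs$, which is a real Hilbert space. By the Gribonval--Nikolova characterization \cite{GribonvalNikolova2020}, a continuous $T$ with $T(x)\in\arg\min_y\{\tfrac12\|x-y\|^2+\phi(y)\}$ for every $x\in\Hs$ is the gradient of a convex potential $\Psi\in C^1(\Hs)$. Thus $T=\nabla\Psi$ with $\Psi$ globally $C^1$. Convexity of $\Psi$ is not needed; only the $C^1$ regularity matters for Proposition~\ref{prop:variational-reciprocity}.

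By assumption, $T$ admits a finite $S$-memory representation \eqref{eq:finite-S-rep}, and it is continuous as a map $\Hs\to\Hs$. This is exactly the class of maps in Theorem~\ref{thm:memory-reversal}. Together with $(\mathrm{Sep}_{F_S})$, that theorem applies verbatim and gives $\operatorname{Act}(T)=-\operatorname{Act}(T)$.

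For the consequence, suppose $S\cap(-S)\subset\{0\}$ and a nonzero $k\in S$ is active. Then $-k$ is active, so $-k\in S$, contradicting the hypothesis. This is Corollary~\ref{cor:one-sided-variational} applied to $T$. If one also wants the pointwise-local form \eqref{eq:local-T} for $S=\{0,\dots,m\}$, the same countable-dense-set argument from that corollary transfers without change.

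The only point needing care is that the Gribonval--Nikolova result holds in an arbitrary (possibly infinite-dimensional) real Hilbert space, not only in $\R^n$. In particular, continuity of $T$ on all of $\Hs$ must yield a potential that is $C^1$ in the Fréchet sense on all of $\Hs$, as Theorem~\ref{thm:memory-reversal} requires. I would cite their Hilbert-space statement directly.

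As a fallback, I would argue as follows. $T$ is a selection of $\partial\Psi$ with $\Psi$ convex, namely $\Psi=\tfrac12\|\cdot\|^2-e_\phi$ up to convexification. A continuous single-valued selection of the subdifferential of a convex lsc function forces Fréchet differentiability with $\nabla\Psi=T$. Continuity of the gradient then gives $\Psi\in C^1$.
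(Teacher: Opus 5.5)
Your proposal is correct and follows the paper's own route: the corollary is stated right after the paper invokes Gribonval--Nikolova to write a continuous proximity operator as $\nabla\Psi$ with $\Psi$ convex and $C^1$, and Theorem~\ref{thm:memory-reversal} together with the argument of Corollary~\ref{cor:one-sided-variational} then give both claims. Your fallback also works in any real Hilbert space, and the hedge ``up to convexification'' is unnecessary: $\tfrac12\|\cdot\|^2-e_\phi$ is already convex as a supremum of affine functions, and the subgradient inequalities at $x$ and $x+h$ bound the remainder by $\|T(x+h)-T(x)\|\,\|h\|$, which gives Fr\'echet differentiability with continuous gradient $T$.
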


\begin{corollary}[Shifted fibre proximal maps are not global proximal maps]\label{cor:shifted-fibre-prox}
Let $\phi_\omega\in\Gamma_0(H)$ be a measurable family whose pointwise proximal maps $p_\omega=\prox_{\phi_\omega}$ define
\begin{equation}\label{eq:shifted-fibre-prox}
 (Tu)(\omega)=p_\omega(u(\theta\omega))
\end{equation}
on $\Hs$. Assume $(\mathrm{Sep}_{\{0,1,2\}})$. If $p_\omega$ is nonconstant on a set of positive measure, then there is no proper penalty $\Phi$ for which $T$ is a continuous proximity operator on $\Hs$. In particular, there is no $\Phi\in\Gamma_0(\Hs)$ with $T=\prox_\Phi$.
\end{corollary}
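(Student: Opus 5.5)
We will argue by contradiction, reducing to Corollary~\ref{cor:prox-memory-reversal} with memory set $S=\{1\}$. The map \eqref{eq:shifted-fibre-prox} has the form \eqref{eq:finite-S-rep} with $t_\omega(z_1)=p_\omega(z_1)$. For this set, $F_S=\{0\}\cup\{1\}\cup\{2\}=\{0,1,2\}$, so the hypothesis $(\mathrm{Sep}_{\{0,1,2\}})$ is exactly $(\mathrm{Sep}_{F_S})$. Also $S\cap(-S)=\emptyset\subset\{0\}$.

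First we check that $T$ is a legitimate finite $S$-memory map on $\Hs$. Each $p_\omega$ is firmly nonexpansive, hence $1$-Lipschitz, so $t_\omega$ is continuous. Measurability of $\omega\mapsto p_\omega(z)$ is part of the phrase ``measurable family whose pointwise proximal maps define $T$''; if needed, it follows from measurability of the normal integrand $\phi_\omega$ via the standard measurable-argmin selection. The map sends $\Hs$ into $\Hs$ because of the bound
\[
\|p_\omega(z)\|\le\|p_\omega(0)\|+\|z\|,
\]
together with the fact that $T0\in\Hs$ is implicit in the assumption that $T$ is defined on $\Hs$. Continuity of $T$ on $\Hs$ follows from the pointwise $1$-Lipschitz bound and measure preservation:
\[
\|Tu-Tv\|\le\|U(u-v)\|=\|u-v\|.
\]

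Next we check that lag $1$ is active. By the definition, lag $1$ is inactive exactly when, outside a null set, $t_\omega=p_\omega$ is independent of its only coordinate, i.e.\ $p_\omega$ is constant for a.e.\ $\omega$. The hypothesis that $p_\omega$ is nonconstant on a set of positive measure is therefore precisely the statement $1\in\operatorname{Act}(T)$. The one subtlety is that the definition quantifies ``outside one null set''. The countable dense set formulation shows that nonconstancy on a positive-measure set yields two points $z,z'\in D$ with $p_\omega(z)\ne p_\omega(z')$ on a set of positive measure. This uses continuity of $p_\omega$ and a countable union argument.

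Now suppose $T$ were a continuous proximity operator on $\Hs$ for some proper penalty $\Phi$. Corollary~\ref{cor:prox-memory-reversal} would then give $\operatorname{Act}(T)=-\operatorname{Act}(T)$. This forces $-1\in\operatorname{Act}(T)\subset S=\{1\}$, a contradiction. Equivalently, since $S\cap(-S)\subset\{0\}$, all nonzero lags would be inactive, contradicting $1\in\operatorname{Act}(T)$.

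For the final clause, let $\Phi\in\Gamma_0(\Hs)$. Then $\prox_\Phi$ is single-valued and firmly nonexpansive, hence continuous, so $T=\prox_\Phi$ would make $T$ a continuous proximity operator, which is excluded. The main obstacle is only the bookkeeping in the verification that $T$ is a well-defined finite $S$-memory map; in particular, whether $T0\in\Hs$ must be assumed or follows from the setup. Everything else is a direct instantiation of Corollary~\ref{cor:prox-memory-reversal}.
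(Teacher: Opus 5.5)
Your proposal is correct and follows the paper's proof. You take $S=\{1\}$ with $F_S=\{0,1,2\}$, identify nonconstancy of $p_\omega$ on a positive-measure set with activity of lag $1$, and invoke Corollary~\ref{cor:prox-memory-reversal} to reach a contradiction through the absent reverse lag $-1$; your extra checks (the Carath\'eodory structure and continuity of $T$ via the $1$-Lipschitz bound, and continuity of $\prox_\Phi$ for $\Phi\in\Gamma_0(\Hs)$) only spell out bookkeeping the paper leaves implicit.
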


\begin{proof}
The representation \eqref{eq:shifted-fibre-prox} has memory set $S=\{1\}$. If $T$ were a continuous proximity operator, Corollary~\ref{cor:prox-memory-reversal} would imply that lag $1$ is inactive because the reverse lag $-1$ is absent. Thus $p_\omega$ must be constant for almost every $\omega$. More explicitly, if nonconstancy held on a positive-measure set, a countable dense set in $H$ would provide $v,w$ and a positive-measure subset on which $p_\omega(v)\ne p_\omega(w)$, contradicting inactivity of the only memory coordinate.
\end{proof}

\begin{remark}[Scope]\label{rem:prox-scope}
The conclusion concerns representation as one ordinary proximity operator on the same stationary section space. It does not exclude variational principles on enlarged trajectory spaces, nor selfdual or Fitzpatrick-type representations for more general monotone dynamics.
\end{remark}

\subsection{Continuity is sharp for proximal selections}
The continuity assumption in the proximal consequence cannot be removed merely by selecting one minimizer from a multivalued nonconvex proximity correspondence.

\begin{example}[A discontinuous proximal selection with directed dependency]\label{ex:discontinuous-prox}
Fix $\lambda>0$ and define on $\R^2$
\[
 \phi(y_1,y_2)=\lambda\,\mathbf 1_{\{y_1\ne0\}}.
\]
Let $a=\sqrt{2\lambda}$. The proximal minimization separates. Its second coordinate is uniquely $y_2=x_2$. For the first coordinate, the unique minimizer is $0$ when $|x_1|<a$, the unique minimizer is $x_1$ when $|x_1|>a$, and at $|x_1|=a$ both $0$ and $x_1$ are global minimizers.

Choose a single-valued proximal selection $T$ that agrees with these unique minimizers away from the tie set and, at $x_1=a$, uses
\[
 T_1(a,x_2)=
 \begin{cases}
 a,&x_2\ge0,\\
 0,&x_2<0,
 \end{cases}
 \qquad T_2(x_1,x_2)=x_2.
\]
At $x_1=-a$ choose either valid minimizer by any fixed rule. Then $T(x)$ is a global proximal minimizer for every $x$ but is discontinuous.

Let $M=\operatorname{span}(e_1)$ and $N=\operatorname{span}(e_2)$. Since $T_2(x)=x_2$, perturbations in $M$ never change the $N$-output, so $M\notinf_TN$. At $x_1=a$, changing the sign of $x_2$ changes $T_1$, so $N\notinf_TM$ fails. Thus dependency reciprocity can fail for discontinuous proximal selections.
\end{example}

\section{Strong compactness of full measurable-selection lifts}

This section records a complementary obstruction that does not use dynamics.

\begin{lemma}[Closedness and decomposability]\label{lem:selector-closed}
Let $C:\Omega\rightrightarrows X$ be measurable, nonempty closed-valued and $p$-integrably bounded. Then $S_p(C)$ is nonempty, $D_p$-closed and decomposable.
\end{lemma}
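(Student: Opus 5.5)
The lemma has three parts: nonemptiness, $D_p$-closedness, and decomposability. I would prove them in that order, using only the Castaing representation \eqref{eq:castaing} and the definition \eqref{eq:selector-lift}.

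\textbf{Nonemptiness.} Take the Castaing selection $c_1$. It is measurable and $c_1(\omega)\in C(\omega)$ for a.e.\ $\omega$. The only point to check is integrability. From $d(c_1(\omega),x_\circ)\le r_C(\omega)$ and $r_C\in L^p(\Omega)$ we get $\int d(c_1,x_\circ)^p\,d\Pp<\infty$, so $c_1\in S_p(C)$. Strong measurability of $c_1$ is automatic, since $X$ is Polish and hence separable.

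\textbf{Closedness.} Let $u_n\in S_p(C)$ with $D_p(u_n,u)\to0$ for some $u\in L^p(\Omega;X)$.
\begin{itemize}
\item Pass to a subsequence with $\sum_n D_p(u_{n},u)^p<\infty$. Then $\int\sum_n d(u_n,u)^p\,d\Pp<\infty$, so $d(u_{n}(\omega),u(\omega))\to0$ for a.e.\ $\omega$.
\item Discard the countable union of null sets on which some $u_{n}(\omega)\notin C(\omega)$.
\item Closedness of $C(\omega)$ then gives $u(\omega)\in C(\omega)$ a.e., hence $u\in S_p(C)$.
\end{itemize}
The one place needing care is the $L^p$-to-a.e.\ subsequence step. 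Because $X$ is only a metric space, not a Banach space, I would not invoke a Riesz--Fischer theorem. I would instead work directly with the scalar functions $d(u_n,u)$, which are measurable because $X$ is separable and the map $(x,y)\mapsto d(x,y)$ is continuous.

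\textbf{Decomposability.} Given $u,v\in S_p(C)$ and $A\in\Ff$, set $w=1_Au+1_{A^c}v$.
\begin{itemize}
\item $w$ is measurable, being a measurable pasting of measurable maps.
\item $w(\omega)\in C(\omega)$ wherever both $u(\omega)$ and $v(\omega)$ lie in $C(\omega)$, so a.e.
\item $d(w,x_\circ)^p\le d(u,x_\circ)^p+d(v,x_\circ)^p$ is integrable (alternatively, $d(w,x_\circ)\le r_C$).
\end{itemize}
Hence $w\in S_p(C)$. No real obstacle is expected anywhere in the lemma; the subsequence extraction in the closedness step is the only part that requires attention.
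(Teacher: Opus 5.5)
Your proposal is correct and follows the paper's own route. A Castaing selection dominated by the envelope $r_C$ gives nonemptiness, an a.e.\ convergent subsequence combined with closedness of the fibres gives $D_p$-closedness, and measurable pasting gives decomposability. You simply write out details the paper leaves implicit, namely the summable-subsequence extraction done through the scalar functions $d(u_n,u)$ and the $L^p$ bound for the pasted section.
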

\begin{proof}
A Castaing selection belongs to $L^p$ by the integrable envelope, so $S_p(C)$ is nonempty. Decomposability is immediate. If $u_n\to u$ in $D_p$, a subsequence converges pointwise a.e.; closedness of the fibres gives $u(\omega)\in C(\omega)$ a.e.
\end{proof}

\begin{lemma}[Decomposable compactness rigidity]\label{lem:decomp-rigidity}
Let $D\subset L^p(\Omega;X)$ be decomposable. If $D$ is relatively compact in $D_p$, then any two $u,v\in D$ satisfy $u=v$ a.e.\ on $\Omega_{\rm na}$.
\end{lemma}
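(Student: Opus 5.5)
We argue by contradiction. Suppose $u,v\in D$ and the set $B=\{\omega\in\Omega_{\rm na}: u(\omega)\ne v(\omega)\}$ has positive measure. The plan is to build from $u$ and $v$, by measurable pasting, an infinite family in $D$ that is uniformly separated in $D_p$. This contradicts relative compactness, because a relatively compact set is totally bounded.

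\textbf{Step 1: a set where $u$ and $v$ are uniformly apart.} Since $d(u,v)>0$ on $B$, there is $\delta>0$ such that $B_\delta=\{\omega\in B: d(u(\omega),v(\omega))\ge\delta\}$ has positive measure $\beta$. Measurability of $\omega\mapsto d(u(\omega),v(\omega))$ follows from strong measurability and separability of $X$.

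\textbf{Step 2: independent splittings.} Because $B_\delta\subset\Omega_{\rm na}$ and the restricted measure is nonatomic, Sierpi\'nski's (Lyapunov-type) intermediate-value property applies. Iterating it dyadically, we construct measurable sets $G_n\subset B_\delta$, $n\ge1$, of Rademacher type: for every $n\ne m$,
\[
 \Pp(G_n\triangle G_m)=\beta/2.
\]
Concretely, we split $B_\delta$ into $2^n$ pieces of equal measure at stage $n$, refining the pieces of stage $n-1$. We then let $G_n$ be the union of the even-indexed pieces of stage $n$. Any two distinct $G_n,G_m$ then intersect in measure $\beta/4$, which gives the displayed identity.

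\textbf{Step 3: pasted sections.} Define
\[
 w_n=u\mathbf 1_{G_n}+v\mathbf 1_{\Omega\setminus G_n}.
\]
Each $w_n$ is a measurable pasting of $u$ and $v$, so $w_n\in D$ by decomposability. For $n\ne m$, the sections $w_n$ and $w_m$ differ exactly on $G_n\triangle G_m$, and there one of them equals $u$ while the other equals $v$. Hence
\[
 D_p(w_n,w_m)^p=\int_{G_n\triangle G_m}d(u,v)^p\,d\Pp\ge\delta^p\beta/2.
\]
So $(w_n)$ has no Cauchy subsequence, contradicting relative compactness of $D$ in $D_p$. Therefore $u=v$ a.e.\ on $\Omega_{\rm na}$.

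The only genuinely nontrivial step is Step 2, producing infinitely many sets with pairwise symmetric differences bounded below. The difficulty is that this needs nonatomicity and not merely positive measure. On atoms the construction fails, as Lemma~\ref{lem:atom-constant} shows, and this is why the conclusion is restricted to $\Omega_{\rm na}$. We will cite the standard fact that a nonatomic finite measure takes all values in $[0,\beta]$ on measurable subsets, which yields the equal-measure dyadic refinement.
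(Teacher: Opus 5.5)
Your proposal is correct and follows essentially the same route as the paper. Both arguments isolate a positive-measure set in $\Omega_{\rm na}$ where $d(u,v)\ge\delta$, choose infinitely many subsets with pairwise symmetric differences of measure $\beta/2$, and paste $u$ and $v$ along them to obtain a $\delta(\beta/2)^{1/p}$-separated sequence in $D$. Your dyadic Rademacher-type construction makes explicit the choice of the sets $E_n$, which the paper asserts without detail.
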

\begin{proof}
If $u$ and $v$ differ on a positive-measure subset of the nonatomic component, then for some $\delta>0$ there is a nonatomic set $B$ of measure $\beta>0$ on which $d(u,v)\ge\delta$. Choose measurable $E_n\subset B$ with $\Pp(E_n)=\beta/2$ and $\Pp(E_n\triangle E_m)=\beta/2$ for $n\ne m$. Paste $u$ on $E_n$ and $v$ on $E_n^c$ to obtain $w_n\in D$. Then
\[
 D_p(w_n,w_m)^p\ge\delta^p\beta/2,
\]
contradicting relative compactness.
\end{proof}

\begin{theorem}[Strong compactness rigidity]\label{thm:selector-compactness}
Let $C:\Omega\rightrightarrows X$ be measurable, nonempty compact-valued and $p$-integrably bounded. Then the following are equivalent:
\begin{enumerate}[label=(\roman*)]
\item $S_p(C)$ is compact in $L^p(\Omega;X)$;
\item $C(\omega)$ is a singleton for a.e.\ $\omega\in\Omega_{\rm na}$.
\end{enumerate}
In particular, on a purely nonatomic probability space, the full selection lift is strongly compact if and only if the correspondence is essentially single-valued.
\end{theorem}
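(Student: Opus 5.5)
The direction (i)$\Rightarrow$(ii) comes from Lemma~\ref{lem:decomp-rigidity}; the converse is proved by splitting the base into its atomic and nonatomic parts.

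\textbf{Step 1: (i)$\Rightarrow$(ii).} By Lemma~\ref{lem:selector-closed}, $S_p(C)$ is nonempty and decomposable. If it is compact, Lemma~\ref{lem:decomp-rigidity} says any two selections in $S_p(C)$ agree a.e.\ on $\Omega_{\rm na}$. Suppose $C$ were not a.e.\ singleton on $\Omega_{\rm na}$. Take a Castaing representation $(c_n)$ as in \eqref{eq:castaing}. Then there are indices $n\ne m$ and a positive-measure set $B\subset\Omega_{\rm na}$ with $c_n\ne c_m$ on $B$; otherwise all $c_n$ agree a.e.\ on $\Omega_{\rm na}$, and the closure $C(\omega)$ would be a singleton. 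Each $c_n$ lies in $L^p$, because $d(c_n,x_\circ)\le r_C\in L^p$. Hence $c_n,c_m\in S_p(C)$ differ on $B$, a contradiction.

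\textbf{Step 2: (ii)$\Rightarrow$(i), reduction to the atoms.} Closedness is already given by Lemma~\ref{lem:selector-closed}, so it suffices to prove total boundedness, or sequential compactness. Under (ii), every $u\in S_p(C)$ equals the unique selection $c$ a.e.\ on $\Omega_{\rm na}$. By Lemma~\ref{lem:atom-constant}, on each atom $A_j$ every $u$ is a.e.\ equal to a constant $x_j$. Moreover $x_j$ lies in $C(\omega)$ for a.e.\ $\omega\in A_j$.

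\textbf{Step 3: the atomic fibres are compact.} The multifunction $C$ restricted to $A_j$ is measurable with closed values, so by the same atom argument it is a.e.\ equal to a fixed compact set $K_j$. One route is to note that $\omega\mapsto d(x,C(\omega))$ is a.e.\ constant on $A_j$ for each $x$ in a countable dense set; closed sets are determined by these distance functions. Thus $S_p(C)$ is identified with a subset of $\prod_j K_j$ via $u\mapsto(x_j)_j$, and the metric is
\[
 D_p(u,v)^p=\sum_j\Pp(A_j)\,d(x_j,y_j)^p.
\]

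\textbf{Step 4: compactness via a diagonal argument.} Given a sequence in $S_p(C)$, extract a diagonal subsequence whose coordinate $x_j^{(n)}$ converges in $K_j$ for every $j$. The tails are uniformly small:
\[
 \sum_{j>J}\Pp(A_j)\,d(x_j,y_j)^p\le 2^p\int_{\bigcup_{j>J}A_j}r_C^p\,d\Pp\to0,
\]
because $r_C\in L^p$. This yields $D_p$-convergence of the subsequence, and closedness places the limit in $S_p(C)$.

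The main obstacle is Step 3, the a.e.\ constancy of the compact fibre on an atom. If that is awkward, the measurable selection $c_n$ from the Castaing representation can be used directly instead. The $c_n$ are a.e.\ constant on $A_j$, with values $c_n^j$, so $K_j=\overline{\{c_n^j\}}$ a.e.\ on $A_j$. This avoids distance functions entirely.
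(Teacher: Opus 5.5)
Your proposal is correct and follows the paper's proof essentially step for step. The direction (i)$\Rightarrow$(ii) applies Lemma~\ref{lem:decomp-rigidity} to Castaing selections, and (ii)$\Rightarrow$(i) uses the atomic reduction, a diagonal argument and the uniform tail bound $2^p\int_{\bigcup_{j>J}A_j}r_C^p\,d\Pp\to0$. Your Step~3, which uses the a.e.\ constancy of the Castaing selections on each atom, justifies something the paper leaves implicit when it applies Lemma~\ref{lem:atom-constant} directly to the correspondence.
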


\begin{proof}
If $S_p(C)$ is compact, Lemma~\ref{lem:decomp-rigidity} implies that all Castaing selections agree a.e.\ on $\Omega_{\rm na}$, hence the fibres there are singletons.

Conversely, assume $C(\omega)=\{c(\omega)\}$ on the nonatomic component. On each atom $A_j$, Lemma~\ref{lem:atom-constant} identifies the fibre correspondence a.e.\ with a compact set $C_j\subset X$. Every selection is represented by $c$ on $\Omega_{\rm na}$ and a coordinate $x_j\in C_j$ on each atom. If $p_j=\Pp(A_j)$ and $r_j=\sup_{x\in C_j}d(x,x_\circ)$, integrable boundedness gives
\[
 \sum_jp_jr_j^p<\infty.
\]
For any sequence of selections, compactness of each $C_j$ and a diagonal argument give coordinatewise convergence along a subsequence. The uniform tail estimate
\[
 \sum_{j>N}p_jd(x_{n,j},x_j)^p
 \le 2^p\sum_{j>N}p_jr_j^p\to0
\]
upgrades this to strong $L^p$ convergence.
\end{proof}

\begin{remark}\label{rem:selector-positioning}
The measurable-selection and decomposability ingredients are classical \cite{CastaingValadier1977}. The theorem is used here as the strong-compactness counterpart of the variational lifting obstructions, not as a claim that decomposable-set theory begins with the present paper.
\end{remark}

\section{Branch pasting and strong ergodicity}

We now identify the dynamical compactness threshold for a two-branch class of approximate solutions.

\subsection{One transformation}
Let $(X,d)$ be Polish, let $Y$ be Banach, and consider a one-step residual
\begin{equation}\label{eq:G-one-step}
 (G\Gamma)(\omega)=G_\omega(\Gamma(\omega),\Gamma(\theta\omega)).
\end{equation}
Assume two coherent branches $\xi_0,\xi_1\in L^p(\Omega;X)$ satisfy
\begin{equation}\label{eq:coherent-branches}
 G_\omega(\xi_i(\omega),\xi_i(\theta\omega))=0
 \quad\text{a.e., }i=0,1,
\end{equation}
and
\begin{equation}\label{eq:pointwise-separation}
 d(\xi_0(\omega),\xi_1(\omega))>0\quad\text{a.e.}
\end{equation}
Define
\[
 q(\omega)=\max_{i\ne j}\|G_\omega(\xi_i(\omega),\xi_j(\theta\omega))\|_Y,
\]
\[
 r(\omega)=\min_{i\ne j}\|G_\omega(\xi_i(\omega),\xi_j(\theta\omega))\|_Y.
\]
Assume
\begin{equation}\label{eq:cross-assumptions}
 q\in L^p(\Omega),\qquad r>0\ \text{a.e.}
\end{equation}
For measurable $A\subset\Omega$, let $\Gamma_A=\xi_1$ on $A$ and $\Gamma_A=\xi_0$ on $A^c$, and set $\partial_\theta A=A\triangle\theta^{-1}A$.

\begin{lemma}[Two-sided branch-pasting estimate]\label{lem:branch-two-sided}
Under \eqref{eq:coherent-branches}--\eqref{eq:cross-assumptions},
\begin{equation}\label{eq:branch-two-sided}
 \int_{\partial_\theta A}r^p\,d\Pp
 \le\|G(\Gamma_A)\|_{L^p}^p
 \le\int_{\partial_\theta A}q^p\,d\Pp.
\end{equation}
\end{lemma}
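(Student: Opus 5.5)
The plan is a pointwise case analysis on the label pair $(1_A(\omega),1_A(\theta\omega))$, followed by integration. First I write out $\Gamma_A(\omega)=\xi_{i(\omega)}(\omega)$ and $\Gamma_A(\theta\omega)=\xi_{j(\omega)}(\theta\omega)$, where $i(\omega)=1_A(\omega)$ and $j(\omega)=1_A(\theta\omega)=1_{\theta^{-1}A}(\omega)$. It follows that
\[
 (G\Gamma_A)(\omega)=G_\omega\bigl(\xi_{i(\omega)}(\omega),\xi_{j(\omega)}(\theta\omega)\bigr)
\]
for a.e.\ $\omega$. One point needs care: composition with $\theta$ is well defined on equivalence classes. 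This holds because $\theta$ is measure preserving, so $\theta^{-1}$ maps null sets to null sets. Hence the identity holds outside a single null set, which also absorbs the exceptional sets in \eqref{eq:coherent-branches}.

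Next I split $\Omega$ into two pieces. The first is $\Omega\setminus\partial_\theta A$, where $i(\omega)=j(\omega)$. There \eqref{eq:coherent-branches} gives $(G\Gamma_A)(\omega)=0$ a.e., since $\omega\in A\cap\theta^{-1}A$ uses branch $1$ on both slots and $\omega\in A^c\cap(\theta^{-1}A)^c$ uses branch $0$ on both. The second piece is $\partial_\theta A=A\triangle\theta^{-1}A$, where $i(\omega)\ne j(\omega)$, so $(G\Gamma_A)(\omega)$ is one of the two cross residuals $G_\omega(\xi_i(\omega),\xi_j(\theta\omega))$ with $i\ne j$. By the definitions of $q$ and $r$ as the max and min over the two ordered cross pairs, I get
\[
 r(\omega)\le\|(G\Gamma_A)(\omega)\|_Y\le q(\omega)
\]
on $\partial_\theta A$.

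Finally I raise to the $p$-th power and integrate over the two pieces. The contribution of the complement vanishes, and monotonicity of the integral gives \eqref{eq:branch-two-sided}. The assumption $q\in L^p$ guarantees that $G(\Gamma_A)\in L^p(\Omega;Y)$, so the middle quantity is finite. Measurability of $\omega\mapsto G_\omega(\xi_i(\omega),\xi_j(\theta\omega))$ is implicitly assumed in defining $q$ and $r$, and I would take it as given. The pointwise separation \eqref{eq:pointwise-separation} and $r>0$ are not needed for the inequality itself; they only make the lower bound meaningful later.

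No step is a genuine obstacle. The only point needing care is the null-set bookkeeping: identifying $1_A\circ\theta$ with $1_{\theta^{-1}A}$, and ensuring that the coherence identity for $\xi_i$ at the pair $(\omega,\theta\omega)$ holds off a common null set.
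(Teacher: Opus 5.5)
Your proposal is correct and follows the paper's proof: off $\partial_\theta A$ the labels at $\omega$ and $\theta\omega$ agree, so coherence kills the residual, and on $\partial_\theta A$ the residual is one of the two cross-branch terms, hence lies between $r$ and $q$. Your extra null-set bookkeeping and your observation that \eqref{eq:pointwise-separation} and $r>0$ are not needed for the inequality itself are accurate refinements of the same argument.
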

\begin{proof}
Outside $\partial_\theta A$, the labels at $\omega$ and $\theta\omega$ agree, so the residual vanishes by coherence. On the boundary the residual is exactly one of the two cross-branch terms, which lies between $r$ and $q$.
\end{proof}

\begin{theorem}[Residual vanishing equals asymptotic label invariance]\label{thm:residual-ai}
For measurable sets $A_n$,
\begin{equation}\label{eq:residual-ai-iff}
 \|G(\Gamma_{A_n})\|_{L^p}\to0
 \quad\Longleftrightarrow\quad
 \Pp(A_n\triangle\theta^{-1}A_n)\to0.
\end{equation}
\end{theorem}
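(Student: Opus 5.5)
The plan is to deduce both implications from the two-sided estimate of Lemma~\ref{lem:branch-two-sided}, combined with elementary absolute-continuity facts for the finite measures $q^p\,d\Pp$ and $r^p\,d\Pp$. Write $B_n=\partial_\theta A_n=A_n\triangle\theta^{-1}A_n$. The lemma gives
\[
 \int_{B_n}r^p\,d\Pp\le\|G(\Gamma_{A_n})\|_{L^p}^p\le\int_{B_n}q^p\,d\Pp,
\]
so the statement reduces to showing that $\Pp(B_n)\to0$ if and only if the two boundary integrals tend to zero.

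For ($\Leftarrow$), assume $\Pp(B_n)\to0$. Since $q\in L^p(\Omega)$ by \eqref{eq:cross-assumptions}, the measure $q^p\,d\Pp$ is finite and absolutely continuous with respect to $\Pp$. Hence for every $\varepsilon>0$ there is $\delta>0$ such that $\Pp(B)<\delta$ implies $\int_Bq^p\,d\Pp<\varepsilon$. The upper bound then forces $\|G(\Gamma_{A_n})\|_{L^p}\to0$.

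For ($\Rightarrow$), assume the residual tends to zero, so that $\int_{B_n}r^p\,d\Pp\to0$. I need a reverse absolute-continuity statement, and this is the main obstacle, because $r$ is only a.e.\ positive and not bounded below. I will argue by truncation. Fix $\varepsilon>0$. Since $r>0$ a.e., the sets $\{r<\eta\}$ decrease, as $\eta\downarrow0$, to a null set. So I can choose $\eta>0$ with $\Pp\{r<\eta\}<\varepsilon$. Then
\[
 \Pp(B_n)\le\Pp\{r<\eta\}+\Pp(B_n\cap\{r\ge\eta\})\le\varepsilon+\eta^{-p}\int_{B_n}r^p\,d\Pp.
\]
This gives $\limsup_n\Pp(B_n)\le\varepsilon$, and since $\varepsilon$ is arbitrary, $\Pp(B_n)\to0$.

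Finally, I will check measurability. The quantity $r$ is a minimum of finitely many norms of measurable sections, since $G_\omega$ is assumed jointly Carath\'eodory enough for the residual to be measurable. It is also worth noting that finiteness of $\int r^p$ is not needed, because $r\le q$.
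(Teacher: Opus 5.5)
Your proposal is correct and follows essentially the same route as the paper. The paper also takes the upper bound of Lemma~\ref{lem:branch-two-sided} together with absolute continuity of $q^p\,d\Pp$ for one direction, and the lower bound together with truncation on $\{r<\delta\}$ for the converse (the paper picks $\Pp(r<\delta)<\varepsilon/2$ instead of taking a $\limsup$).
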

\begin{proof}
If the boundary measure tends to zero, the upper bound in \eqref{eq:branch-two-sided} and absolute continuity of the integral of $q^p$ give residual convergence to zero.

Conversely, suppose the residual tends to zero. The lower bound gives
\[
 \int_{\partial_\theta A_n}r^p\,d\Pp\to0.
\]
Given $\varepsilon>0$, because $r>0$ a.e.\ choose $\delta>0$ with $\Pp(r<\delta)<\varepsilon/2$. Then
\[
 \Pp(\partial_\theta A_n)
 \le\Pp(r<\delta)+\delta^{-p}\int_{\partial_\theta A_n}r^p\,d\Pp<\varepsilon
\]
for all sufficiently large $n$.
\end{proof}

\subsection{Finitely generated group actions}
Let a finitely generated group $\Gamma$ act by p.m.p.\ transformations on $(\Omega,\Ff,\Pp)$, and let $S=S^{-1}$ be a finite generating set. For each $s\in S$, let $Y_s$ be Banach and let
\[
 G^s_\omega:X\times X\to Y_s
\]
be a measurable one-step residual. Assume the same two branches $\xi_0,\xi_1$ are coherent for every generator:
\[
 G^s_\omega(\xi_i(\omega),\xi_i(s\omega))=0
 \quad\text{a.e., }i=0,1.
\]
Define $q_s,r_s$ from the cross-branch terms as above and assume $q_s\in L^p$ and $r_s>0$ a.e.\ for every $s\in S$. Put
\[
 \|\mathbf G(\Gamma)\|_{p,S}^p
 =\sum_{s\in S}\|G^s(\Gamma)\|_{L^p}^p.
\]

\begin{proposition}[Generator-wise residual criterion]\label{prop:group-residual-ai}
For measurable $A_n$,
\begin{equation}\label{eq:group-ai}
 \|\mathbf G(\Gamma_{A_n})\|_{p,S}\to0
 \quad\Longleftrightarrow\quad
 \Pp(A_n\triangle s^{-1}A_n)\to0
 \quad\forall s\in S.
\end{equation}
The right-hand side is equivalent to asymptotic invariance under every $g\in\Gamma$.
\end{proposition}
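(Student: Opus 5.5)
The plan is to reduce to the one-transformation result, Theorem~\ref{thm:residual-ai}, generator by generator. For each fixed $s\in S$, the map $\omega\mapsto s\omega$ is a measure-preserving (invertible) transformation. The residual $G^s$ together with the branches $\xi_0,\xi_1$ satisfies exactly the hypotheses \eqref{eq:coherent-branches}--\eqref{eq:cross-assumptions}, with $\theta$ replaced by $s$, $q$ by $q_s$ and $r$ by $r_s$.

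The proof of Lemma~\ref{lem:branch-two-sided} and Theorem~\ref{thm:residual-ai} uses only measure preservation, coherence, $q\in L^p$ and $r>0$ a.e.; invertibility is not needed. Hence for each $s$,
\[
 \|G^s(\Gamma_{A_n})\|_{L^p}\to0\iff \Pp(A_n\triangle s^{-1}A_n)\to0 .
\]
Since $S$ is finite and
\[
 \|\mathbf G(\Gamma_{A_n})\|_{p,S}^p=\sum_{s\in S}\|G^s(\Gamma_{A_n})\|_{L^p}^p,
\]
the left side tends to zero iff every summand does. This gives \eqref{eq:group-ai}.

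For the final sentence, I will show that if $\Pp(A_n\triangle s^{-1}A_n)\to0$ for all $s\in S$, then the same holds for every $g\in\Gamma$. The converse is trivial.

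\emph{Triangle step.} Write $g=s_1\cdots s_m$ with $s_i\in S$; this is possible because $S=S^{-1}$ generates. For $g,h\in\Gamma$, the triangle inequality for symmetric differences and invariance of $\Pp$ under $h^{-1}$ give
\[
 \Pp(A\triangle (gh)^{-1}A)\le \Pp(A\triangle h^{-1}A)+\Pp(h^{-1}A\triangle h^{-1}g^{-1}A)
 =\Pp(A\triangle h^{-1}A)+\Pp(A\triangle g^{-1}A).
\]

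\emph{Induction.} Inducting on word length then bounds $\Pp(A_n\triangle g^{-1}A_n)$ by $\sum_{i}\Pp(A_n\triangle s_i^{-1}A_n)$, which tends to zero.

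\emph{Main obstacle.} I expect the only point needing care to be this convention issue: the residual pairs $\omega$ with $s\omega$, so the boundary is $A\triangle s^{-1}A$. I need the composition identity to match the action convention, meaning $(gh)^{-1}A=h^{-1}g^{-1}A$. The rest is bookkeeping.
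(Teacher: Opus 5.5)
Your proposal is correct and follows essentially the same route as the paper. Both apply Theorem~\ref{thm:residual-ai} generator by generator and use finiteness of $S$ to pass to the total residual. Both then bound $\Pp(A_n\triangle g^{-1}A_n)$ by a sum over a word in $S$, using the symmetric-difference triangle inequality and measure preservation; you do this by induction on word length, while the paper writes out the telescoping sum directly for $g^{-1}=t_1\cdots t_k$.
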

\begin{proof}
Apply Theorem~\ref{thm:residual-ai} to each generator. Since $S$ is finite, the generator-wise statements are equivalent to convergence of the total residual. Write $g^{-1}=t_1\cdots t_k$ with $t_j\in S$. The symmetric-difference triangle inequality and measure preservation give
\[
 \begin{aligned}
 \Pp(A_n\triangle g^{-1}A_n)
 &\le \sum_{j=1}^k
 \Pp(t_1\cdots t_{j-1}A_n\triangle t_1\cdots t_jA_n)\\
 &=\sum_{j=1}^k\Pp(A_n\triangle t_jA_n),
 \end{aligned}
\]
which tends to zero because $S=S^{-1}$. Thus generator-wise asymptotic invariance implies invariance for every fixed group element.
\end{proof}

Recall that an ergodic p.m.p.\ action is \emph{strongly ergodic} if every asymptotically invariant sequence $A_n$ is trivial:
\begin{equation}\label{eq:strong-ergodic-def}
 \Pp(A_n)(1-\Pp(A_n))\to0.
\end{equation}
This is classical; see Schmidt \cite{Schmidt1981}.

\begin{corollary}[Strong ergodicity as the branch-pasting compactness threshold]\label{cor:strong-threshold}
Assume the finitely generated p.m.p.\ action above is ergodic and the two-branch hypotheses hold. Then the following are equivalent:
\begin{enumerate}[label=(\roman*)]
\item the action is strongly ergodic;
\item every branch-pasting sequence $\Gamma_{A_n}$ satisfying
\[
 \|\mathbf G(\Gamma_{A_n})\|_{p,S}\to0
\]
is relatively compact in $L^p(\Omega;X)$.
\end{enumerate}
If the action is not strongly ergodic, there is an approximate-zero branch-pasting sequence with no strongly convergent subsequence.
\end{corollary}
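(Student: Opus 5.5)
The plan is to reduce both directions to Proposition~\ref{prop:group-residual-ai}, which identifies approximate-zero branch-pasting sequences with asymptotically invariant sequences $A_n$. After that, the only analytic input is an explicit formula for the $L^p$ distance between two pasted sections:
\[
 D_p(\Gamma_A,\Gamma_B)^p=\int_{A\triangle B}d(\xi_0,\xi_1)^p\,d\Pp .
\]
This holds because $\Gamma_A$ and $\Gamma_B$ differ exactly on $A\triangle B$. There one of them equals $\xi_0$ and the other $\xi_1$.

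\emph{(i)$\Rightarrow$(ii).} Let $\|\mathbf G(\Gamma_{A_n})\|_{p,S}\to0$. By Proposition~\ref{prop:group-residual-ai}, $(A_n)$ is asymptotically invariant. Strong ergodicity gives $\Pp(A_n)(1-\Pp(A_n))\to0$.

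Take any subsequence. Passing to a further subsequence, either $\Pp(A_n)\to0$ or $\Pp(A_n)\to1$. In the first case the distance formula gives
\[
 D_p(\Gamma_{A_n},\xi_0)^p=\int_{A_n}d(\xi_0,\xi_1)^p\,d\Pp\to0.
\]
Here we use absolute continuity of the integral, which is finite since $d(\xi_0,\xi_1)\le d(\xi_0,x_\circ)+d(\xi_1,x_\circ)\in L^p$. In the second case $\Gamma_{A_n}\to\xi_1$ by the same argument applied to $A_n^c$. Every subsequence therefore has a convergent subsequence, which is relative compactness.

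\emph{(ii)$\Rightarrow$(i)} is the contrapositive, and it also gives the final sentence of the statement. Suppose the action is not strongly ergodic. Then there is an asymptotically invariant sequence $A_n$ with $\Pp(A_n)(1-\Pp(A_n))\not\to0$. After passing to a subsequence we may assume $\Pp(A_n)\to c\in(0,1)$. By Proposition~\ref{prop:group-residual-ai}, the sequence $\Gamma_{A_n}$ is approximate-zero.

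It remains to show that no subsequence converges in $L^p$. Suppose $\Gamma_{A_{n_j}}\to\Gamma$ in $L^p$. Then $\Gamma_{A_{n_j}}$ is Cauchy, so by the distance formula
\[
 \int_{A_{n_j}\triangle A_{n_l}}d(\xi_0,\xi_1)^p\,d\Pp\to0 .
\]
Since $d(\xi_0,\xi_1)>0$ a.e., the truncation argument from the proof of Theorem~\ref{thm:residual-ai} (choose $\delta$ with $\Pp(d(\xi_0,\xi_1)<\delta)$ small) converts this into $\Pp(A_{n_j}\triangle A_{n_l})\to0$. Hence $1_{A_{n_j}}$ is Cauchy in $L^1$ and converges to some $1_A$. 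This limit $A$ is $\Gamma$-invariant: for each fixed $g$,
\[
 \Pp(A\triangle g^{-1}A)=\lim_j\Pp(A_{n_j}\triangle g^{-1}A_{n_j})=0 ,
\]
by Proposition~\ref{prop:group-residual-ai} and measure preservation. Moreover $\Pp(A)=c\in(0,1)$. This contradicts ergodicity.

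The main obstacle is this last step: turning strong convergence of the sections into convergence of the label sets. It is exactly where pointwise separation \eqref{eq:pointwise-separation} is essential, and where the ergodicity hypothesis is used, to rule out nontrivial invariant limits. Everything else is bookkeeping through Proposition~\ref{prop:group-residual-ai}.
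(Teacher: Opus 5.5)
Your proposal is correct and follows essentially the same route as the paper. Both directions go through Proposition~\ref{prop:group-residual-ai}, with the forward direction using the dichotomy $\Pp(A_n)\to0$ or $\Pp(A_n)\to1$ together with absolute continuity of $\int_{A_n}d(\xi_0,\xi_1)^p\,d\Pp$, and the converse using the weighted-distance formula, the truncation step to get $L^1$-convergence of the indicators, and ergodicity to exclude a nontrivial invariant limit; your differences (normalizing to $\Pp(A_n)\to c\in(0,1)$ rather than a lower bound $\varepsilon$, and spelling out integrability of $d(\xi_0,\xi_1)^p$ and invariance of the limit set) are only cosmetic.
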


\begin{proof}
Assume strong ergodicity and let the residual tend to zero. Proposition~\ref{prop:group-residual-ai} makes $A_n$ asymptotically invariant, so \eqref{eq:strong-ergodic-def} holds. Consider an arbitrary subsequence. It has a further subsequence along which either $\Pp(A_n)\to0$ or $\Pp(A_n)\to1$. Since
\[
 d(\Gamma_{A_n},\xi_0)^p=1_{A_n}d(\xi_1,\xi_0)^p,
\]
and $d(\xi_0,\xi_1)^p$ is integrable, absolute continuity of the integral gives $\Gamma_{A_n}\to\xi_0$ strongly in the first case; similarly $\Gamma_{A_n}\to\xi_1$ in the second. Thus every subsequence has a convergent further subsequence, proving relative compactness.

Conversely, suppose the action is not strongly ergodic. There is an asymptotically invariant sequence $A_n$ which is nontrivial. Passing to a subsequence, for some $\varepsilon>0$,
\begin{equation}\label{eq:nontrivial-mass}
 \Pp(A_n)(1-\Pp(A_n))\ge\varepsilon
\end{equation}
for every $n$. Proposition~\ref{prop:group-residual-ai} gives $\|\mathbf G(\Gamma_{A_n})\|_{p,S}\to0$.

Suppose a subsequence of $\Gamma_{A_n}$ converged strongly. It would be Cauchy. Write
\[
 \rho(\omega)=d(\xi_0(\omega),\xi_1(\omega))>0\quad\text{a.e.}
\]
Then
\begin{equation}\label{eq:weighted-label-distance}
 D_p(\Gamma_A,\Gamma_B)^p
 =\int_{A\triangle B}\rho^p\,d\Pp.
\end{equation}
A Cauchy sequence for this weighted metric is Cauchy in measure for the indicators: given $\eta>0$, choose $\delta>0$ with $\Pp(\rho<\delta)<\eta/2$, and use
\[
 \Pp(A\triangle B)
 \le\Pp(\rho<\delta)+\delta^{-p}\int_{A\triangle B}\rho^p\,d\Pp.
\]
Hence the indicators have a subsequence converging in measure, and therefore in $L^1$, to $1_A$ for some measurable $A$. Asymptotic invariance passes to the limit, so $A$ is invariant. Ergodicity gives $\Pp(A)\in\{0,1\}$. But $\Pp(A_n)\to\Pp(A)$ along the convergent subsequence, contradicting \eqref{eq:nontrivial-mass}. Thus no strongly convergent subsequence exists.
\end{proof}

\begin{corollary}[Aperiodic ergodic $\Z$-actions]\label{cor:Z-branch}
Let $\theta$ be an aperiodic ergodic p.m.p.\ transformation on a standard nonatomic probability space. Under the two-branch hypotheses, there are branch-pasting sections with residual tending to zero and no strongly convergent subsequence.
\end{corollary}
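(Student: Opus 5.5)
The plan is to reduce the statement to Corollary~\ref{cor:strong-threshold}: an ergodic $\Z$-action generated by $\theta$ (take the symmetric generating set $S=\{\theta,\theta^{-1}\}$) is aperiodic, and we will show it is not strongly ergodic by producing a nontrivial asymptotically invariant sequence. The final sentence of that corollary then gives an approximate-zero branch-pasting sequence with no strongly convergent subsequence. Alternatively, we can argue directly from the one-transformation Theorem~\ref{thm:residual-ai} and the Cauchy-in-measure argument in the proof of the corollary.

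The key construction uses the Rokhlin lemma. Since $\theta$ is aperiodic on a standard nonatomic space, for each $n$ there is a measurable base $B_n$ such that the sets $B_n,\theta B_n,\dots,\theta^{2n-1}B_n$ are pairwise disjoint and their union has measure at least $1-1/n$. Put
\[
 A_n=\bigcup_{j=0}^{n-1}\theta^jB_n,
\]
which is the lower half of the tower, so $\Pp(A_n)\in[\tfrac12(1-1/n),\tfrac12]$. Then $\theta^{-1}A_n\triangle A_n$ is contained in $\theta^{-1}B_n\cup\theta^{n-1}B_n$ together with contributions from the complement of the tower. This gives
\[
 \Pp(A_n\triangle\theta^{-1}A_n)\le 2\Pp(B_n)+2/n\le 3/n\to0,
\]
using $\Pp(B_n)\le 1/(2n)$. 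The same bound holds for $\theta^{-1}$ by measure preservation. Hence $(A_n)$ is asymptotically invariant while $\Pp(A_n)(1-\Pp(A_n))\to\tfrac14$, so the action is not strongly ergodic.

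To conclude, Theorem~\ref{thm:residual-ai} (or Proposition~\ref{prop:group-residual-ai} with the two generators) shows that $\|G(\Gamma_{A_n})\|_{L^p}\to0$. Ergodicity together with \eqref{eq:nontrivial-mass}, which holds here with $\varepsilon=\tfrac15$ say, lets the converse half of the proof of Corollary~\ref{cor:strong-threshold} apply verbatim. Any strongly convergent subsequence would make the indicators $1_{A_n}$ Cauchy in measure, with a limit that is an invariant set of measure $\tfrac12$, which contradicts ergodicity.

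The only genuine step is the boundary estimate for the half-tower. I would handle it by checking pointwise: a point of $A_n\triangle\theta^{-1}A_n$ either lies in $\theta^{n-1}B_n$, or its image under $\theta$ lands in $B_n$ from outside the lower half, or it lies off the tower. A mild subtlety is that the problem statement assumes the two-branch hypotheses only for $\theta$ itself. Since the direct route through Theorem~\ref{thm:residual-ai} needs only these one-step hypotheses, I would present the argument that way rather than via the group formulation.
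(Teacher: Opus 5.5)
Your proposal is correct and follows the paper's route: a Rokhlin tower yields a nontrivial asymptotically invariant label sequence, and the converse half of Corollary~\ref{cor:strong-threshold} then produces an approximate-zero branch-pasting sequence with no strongly convergent subsequence (the paper's appendix uses a general fraction $\alpha$ of the tower, you take $\alpha=\tfrac12$). Going directly through the one-step Theorem~\ref{thm:residual-ai} is a sensible refinement, since it avoids needing two-branch hypotheses for $\theta^{-1}$; the extra $2/n$ in your boundary bound is harmless but unnecessary, because $A_n\triangle\theta^{-1}A_n\subset\theta^{-1}B_n\cup\theta^{n-1}B_n$.
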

\begin{proof}
Rokhlin towers produce nontrivial asymptotically invariant label sets; equivalently, such an aperiodic ergodic $\Z$-action is not strongly ergodic. Apply Corollary~\ref{cor:strong-threshold}.
\end{proof}

\subsection{Properness and resonant Fredholmness}
\begin{corollary}[Nonproperness]\label{cor:nonproper}
In the setting of Corollary~\ref{cor:Z-branch}, if $G$ is a continuous single-valued map on a metric section space containing the bad sequence, then $G$ is not proper.
\end{corollary}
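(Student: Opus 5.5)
The argument will be a contradiction with the definition of properness, using the sequence that Corollary~\ref{cor:Z-branch} produces. Recall that a continuous map $G$ between metric spaces is proper when the preimage of every compact set is compact. I will show that $G$ maps the bad sequence into a compact set while the sequence itself has no convergent subsequence.

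First I fix the data. By Corollary~\ref{cor:Z-branch}, which goes through Corollary~\ref{cor:strong-threshold}, there are sets $A_n$ such that the sections $\Gamma_{A_n}$ satisfy $\|G(\Gamma_{A_n})\|_{L^p}\to0$, yet no subsequence of $\Gamma_{A_n}$ converges strongly. By hypothesis all the $\Gamma_{A_n}$ lie in the metric section space $\mathcal X$ on which $G$ is defined and continuous. I take the target to be $L^p(\Omega;Y)$, or any metric space containing it, since that is where $\|G(\Gamma)\|_{L^p}$ is measured. By \eqref{eq:coherent-branches} the coherent branch $\xi_0=\Gamma_\varnothing$ satisfies $G(\xi_0)=0$, so $0$ lies in the range of $G$.

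Next I build the compact set
\[
 \mathcal C=\{0\}\cup\{G(\Gamma_{A_n}):n\ge1\}.
\]
This is a convergent sequence in the target together with its limit, so it is compact. If $G$ were proper, $G^{-1}(\mathcal C)$ would be compact in $\mathcal X$. It contains every $\Gamma_{A_n}$, so $(\Gamma_{A_n})$ would have a subsequence converging in $\mathcal X$.

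The step I expect to need most care is reconciling the topology of $\mathcal X$ with the $D_p$ topology in which noncompactness was proved. If $\mathcal X$ carries the $D_p$ metric, or any metric that dominates it on the relevant set, a convergent subsequence in $\mathcal X$ would converge in $L^p(\Omega;X)$. That contradicts Corollary~\ref{cor:Z-branch}, so $G$ is not proper. I would state this standing assumption explicitly, namely that $\mathcal X$ is a subset of $L^p(\Omega;X)$ with the induced metric $D_p$. If $\mathcal X$ were allowed a strictly weaker topology, the conclusion could genuinely fail. Under that assumption, the corollary reduces to the observation that the preimage of a compact set contains a sequence with no convergent subsequence.
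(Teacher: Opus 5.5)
Your proposal is correct and follows the paper's own argument: the set $\{0\}\cup\{G(\Gamma_{A_n}):n\ge1\}$ is compact, and its preimage contains the bad sequence, which has no $D_p$-convergent subsequence. Your requirement that the section space carry the $D_p$ metric (or a finer one) states an assumption the paper leaves implicit; your observation that $G(\xi_0)=0$ is harmless but not needed, since compactness of that set does not depend on $0$ lying in the range.
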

\begin{proof}
If $G(\Gamma_n)\to0$, the compact set $\{0\}\cup\{G(\Gamma_n):n\ge1\}$ has inverse image containing a nonprecompact sequence.
\end{proof}

\begin{proposition}\label{prop:UminusI}
For an aperiodic ergodic p.m.p.\ transformation, $\Ran(U-I)$ is not closed on $L^2(\Omega)$.
\end{proposition}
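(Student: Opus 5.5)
The plan is to argue by contradiction: if $\Ran(U-I)$ were closed on $L^2(\Omega)$, then $U-I$ would be bounded below on the orthogonal complement of its kernel. We then contradict this using functions produced by Rokhlin towers.

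First we identify the kernel. Ergodicity gives $\ker(U-I)=\{\text{constants}\}$. Since $U$ is unitary, $\ker(U^*-I)=\ker(U^{-1}-I)$ is also the constants. Hence
\[
 \overline{\Ran(U-I)}=L^2_0(\Omega):=\{f:\textstyle\int f\,d\Pp=0\}.
\]
By the closed range theorem, closedness of the range is equivalent to an estimate
\[
 \|Uf-f\|_2\ge c\|f\|_2\qquad\text{for all } f\in L^2_0(\Omega),
\]
with some $c>0$.

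The key step is to produce functions $f_n\in L^2_0$ with $\|f_n\|_2=1$ and $\|Uf_n-f_n\|_2\to0$. The natural choice comes from the asymptotically invariant sets already used in Corollary~\ref{cor:Z-branch}. By Rokhlin's lemma, for each $n$ there is a tower $B,\theta B,\dots,\theta^{2n-1}B$ of disjoint levels covering all of $\Omega$ except a set of measure less than $1/n$. Let $A_n$ be the union of the lower $n$ levels. Then
\[
 \Pp(A_n\triangle\theta^{-1}A_n)\le 2\Pp(B)+1/n\to0,
\]
while $\Pp(A_n)\to1/2$.

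Next we set $g_n=1_{A_n}-\Pp(A_n)$. These functions lie in $L^2_0$, and $\|g_n\|_2^2=\Pp(A_n)(1-\Pp(A_n))\to1/4$. On the other hand,
\[
 \|Ug_n-g_n\|_2^2=\Pp(A_n\triangle\theta^{-1}A_n)\to0.
\]
Normalizing $f_n=g_n/\|g_n\|_2$ therefore contradicts the lower bound. If one prefers to avoid the explicit tower estimate, it suffices to quote that an aperiodic ergodic $\Z$-action is not strongly ergodic, which the paper already uses. That fact directly supplies a nontrivial asymptotically invariant sequence $A_n$ with $\Pp(A_n)(1-\Pp(A_n))\ge\varepsilon$, and the same computation goes through.

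The main obstacle is justifying the Rokhlin-tower construction: the levels have to be arranged so that the boundary $A_n\triangle\theta^{-1}A_n$ is confined to the base, the middle level and the leftover set. This requires the standard Rokhlin lemma for aperiodic transformations on a nonatomic standard space. Aperiodicity is essential here, because in the periodic case $U$ has finite spectrum, and then $\Ran(U-I)$ is closed. Everything else is the routine Hilbert-space closed-range argument.
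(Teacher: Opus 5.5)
Your proof is correct and follows the same route as the paper: the normalized centered indicators $f_n$ of nontrivial almost-invariant sets satisfy $\|(U-I)f_n\|_2\to0$ in $L^2_0$. This contradicts the lower bound that closed range would force there, since ergodicity makes $L^2_0$ the orthogonal complement of $\ker(U-I)$. Your explicit half-tower sets are the case $\alpha=1/2$ of the paper's Rokhlin appendix; in fact $A_n\triangle\theta^{-1}A_n=\theta^{n-1}B\cup\theta^{-1}B$, so $\Pp(A_n\triangle\theta^{-1}A_n)=2\Pp(B)$ exactly.
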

\begin{proof}
Use nontrivial almost-invariant sets $A_n$ and center their indicators:
\[
 f_n=\frac{1_{A_n}-\Pp(A_n)}{\sqrt{\Pp(A_n)(1-\Pp(A_n))}}.
\]
After passing to a nontrivial subsequence, $\|f_n\|_2=1$, $f_n\in L^2_0$, and $\|(U-I)f_n\|_2\to0$. Ergodicity makes $U-I$ injective on $L^2_0$. Closed range would imply a lower bound there, a contradiction.
\end{proof}

\begin{corollary}[Resonant non-Fredholmness]\label{cor:nonfredholm}
If a linearization has the form $DG(\Gamma_*)=B(U-I)$ with $B$ boundedly invertible, then it has nonclosed range and is not Fredholm. The same conclusion holds for compact perturbations.
\end{corollary}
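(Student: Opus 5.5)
The statement is Corollary~\ref{cor:nonfredholm}. The plan is to reduce the range of $DG(\Gamma_*)$ to the range of $U-I$. Since $B$ is boundedly invertible, we have
\[
 \Ran\bigl(B(U-I)\bigr)=B\,\Ran(U-I),
\]
and $B$ is a homeomorphism of the ambient space onto itself. Closedness of ranges is preserved under homeomorphisms, so $\Ran(B(U-I))$ is closed exactly when $\Ran(U-I)$ is closed. Proposition~\ref{prop:UminusI} says $\Ran(U-I)$ is not closed. Hence $DG(\Gamma_*)$ has nonclosed range. A Fredholm operator always has closed range, so $DG(\Gamma_*)$ is not Fredholm.

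Some care is needed about the underlying space. Proposition~\ref{prop:UminusI} is stated on $L^2(\Omega)$, while the linearization may act on $L^2(\Omega;H)$ or on the $L^2$ section space. For vector-valued sections I would fix a unit vector $e\in H$ and use the approximate eigenvectors $f_n e$, where $f_n$ are the centered almost-invariant indicators from that proposition.

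These vectors satisfy $\|f_n e\|=1$ and $\|(U-I)(f_n e)\|\to0$. They are also orthogonal to $\ker(U-I)$. By ergodicity this kernel consists of the constant sections, and each $f_n$ has mean zero.

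On the closed complement of the kernel, $U-I$ is injective. If its range were closed, the open mapping theorem would give a lower bound there, and the sequence $f_n e$ violates any such bound. This argument only needs that $H$ has a unit vector.

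For compact perturbations $K$, Fredholmness is stable: if $B(U-I)+K$ were Fredholm, then $B(U-I)=(B(U-I)+K)-K$ would be Fredholm too, a contradiction. So $B(U-I)+K$ is not Fredholm.

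The claim "nonclosed range" for $B(U-I)+K$ itself needs more than stability, since a compact perturbation can create closed range with infinite-dimensional kernel. I would phrase that part via the Fredholm conclusion. If a direct argument is wanted, I would use a Weyl-sequence route instead. The $f_n$ converge weakly to zero: they are unit vectors, and they can be chosen asymptotically orthogonal by taking the $A_n$ from Rokhlin towers with independent refinements. Then $Kf_n\to0$ along a subsequence, so $(B(U-I)+K)f_n\to0$. This produces a singular sequence, which rules out Fredholmness (even semi-Fredholmness with finite kernel) directly. The main obstacle is exactly this weak-null choice of the $f_n$. I would pass to a weakly convergent subsequence $f_n\rightharpoonup f$ and check that $f$ is $U$-invariant and mean zero, hence zero by ergodicity. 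The check is that $(U-I)f_n\to0$ strongly forces $(U-I)f=0$.
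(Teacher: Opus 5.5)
Your argument for $B(U-I)$ itself is correct and is what the paper intends. The paper gives no separate proof; it reads the corollary off Proposition~\ref{prop:UminusI} using $\Ran(B(U-I))=B\,\Ran(U-I)$, the fact that Fredholm operators have closed range, and Atkinson stability under compact perturbations. Your passage to $L^2(\Omega;H)$ through $f_ne$ is a correct refinement. The weak-nullity you flag as the main obstacle is automatic by your own closing argument: every weak subsequential limit is $U$-invariant with mean zero, hence $0$. So no Rokhlin refinements are needed.

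The genuine gap is the nonclosed-range half of ``the same conclusion holds for compact perturbations.'' You rightly observe that stability of Fredholmness does not give it, but you leave it unproved. The Weyl-sequence route cannot close it either. A weakly null unit sequence with $(B(U-I)+K)x_n\to0$ only excludes upper semi-Fredholmness, and a closed-range operator with infinite-dimensional kernel admits such sequences too. (The paper states this clause without proof, and the stability argument it presumably relies on has the same hole.)

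The claim is nevertheless true here, and the missing idea is spectral.
\begin{itemize}
\item After complexification, Rokhlin towers give approximate eigenvectors $\sum_{j<N}\phi(j/N)\bar\lambda^{j}1_{\theta^jB_N}$, with $\phi$ a fixed smooth bump, for every $\lambda\in\mathbb T$. Hence $\sigma(U)=\mathbb T$.
\item Since $\mathbb T$ has no isolated points and $U$ is normal, $\sigma_{\mathrm{ess}}(U)=\mathbb T$. So $0$ is an accumulation point of $\sigma_{\mathrm{ess}}(U-I)$.
\item Suppose $A=B(U-I)+K$ had closed range. Then it has a bounded generalized inverse $A^+$ with $AA^+A=A$.
\item Pass to the Calkin algebra and cancel the invertible $\pi(B)$. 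This gives $\pi(U-I)\,y\,\pi(U-I)=\pi(U-I)$ with $y=\pi(A^+)\pi(B)$, so the normal element $\pi(U-I)$ would be von Neumann regular.
\item By the Harte--Mbekhta criterion, regularity forces $0$ to be isolated in $\sigma(\pi(U-I))\cup\{0\}=\sigma_{\mathrm{ess}}(U-I)\cup\{0\}$. This contradicts the second step.
\end{itemize}
Either add such an argument, or state explicitly that for compact perturbations you only claim non-Fredholmness.
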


\section{Variational and Wasserstein examples}

The examples illustrate the structural theorems; they are not additional existence or bifurcation theories.

\subsection{Implicit Hilbert-space steps}
Let $V_\omega:H\to\R$ be $C^2$ and consider
\[
 y\in\arg\min_z\left\{\frac{1}{2\tau}\|z-x\|^2+V_\omega(z)\right\}.
\]
At a smooth critical point,
\[
 \frac{y-x}{\tau}+\nabla V_\omega(y)=0.
\]
The corresponding one-step section residual is
\[
 (Gu)(\omega)=\frac{u(\theta\omega)-u(\omega)}{\tau}+\nabla V_\omega(u(\theta\omega)).
\]
Its derivative is
\[
 DG(u)=M_{\tau^{-1}I+\nabla^2V_\omega(u(\theta\omega))}U-\tau^{-1}I.
\]
Under finite separation, Corollary~\ref{cor:one-sided-smooth} forces the forward coefficient to vanish if a global section potential exists. Thus genuine nonlinear one-step fibrewise variational structure need not lift to a stationary section potential.

\subsection{Compact multivalued variational resolvents}
Let
\[
 \Phi_\omega(x)=\arg\min_y A_\omega(y;x)
\]
be measurable and compact-valued. For a fixed input section $u$ define $C_u(\omega)=\Phi_\omega(u(\omega))$. If $C_u$ is $p$-integrably bounded, Theorem~\ref{thm:selector-compactness} gives
\[
 S_p(C_u)\ \text{compact}
 \quad\Longleftrightarrow\quad
 C_u(\omega)\ \text{singleton a.e.\ on }\Omega_{\rm na}.
\]
Thus compactness of every fibre response set does not protect the full measurable response family from strong noncompactness.

\subsection{A nonconvex Wasserstein minimizing step}
Let $\cP_2(\R)$ be the quadratic Wasserstein space. Fix $\tau>0$, $\beta\tau>1$ and $\kappa>0$, and define
\[
 \mathcal E(\nu)=
 \begin{cases}
 \dfrac\beta4\displaystyle\int(x^2-1)^2\,d\nu(x)+\kappa\Var(\nu),&\displaystyle\int x^4\,d\nu<\infty,\\[1ex]
 +\infty,&\text{otherwise}.
 \end{cases}
\]
Consider one minimizing-movement step from $\delta_0$:
\[
 J_\tau(\delta_0)=\arg\min_{\nu\in\cP_2(\R)}
 \left\{\frac1{2\tau}W_2^2(\nu,\delta_0)+\mathcal E(\nu)\right\}.
\]
Since $W_2^2(\nu,\delta_0)=\int x^2\,d\nu$, define
\[
 \varphi_\tau(x)=\frac{x^2}{2\tau}+\frac\beta4(x^2-1)^2.
\]
The global minima are $\pm a$, where
\[
 a=\sqrt{1-\frac1{\beta\tau}}.
\]
The integral of $\varphi_\tau$ is minimized exactly by measures supported on $\{-a,a\}$, and the positive variance penalty eliminates nontrivial mixtures. Therefore
\[
 J_\tau(\delta_0)=\{\delta_{-a},\delta_a\}.
\]
Putting this same fibre problem over a nonatomic probability space gives a two-point compact fibre correspondence whose full $L^p(W_2)$ selection lift is strongly noncompact by Theorem~\ref{thm:selector-compactness}. This is only an illustration of the lifting mechanism.

\section{Scope, sharpness and limitations}

\subsection{Reciprocity versus classical gradient symmetry}
Proposition~\ref{prop:variational-reciprocity} is deliberately formulated at the level of finite differences and closed-subspace influence. In $C^2$ settings it is compatible with the classical symmetry of the Hessian. Its role here is to organize graph decomposition and dynamical no-go consequences, not to claim a new abstract notion of exact differential form.

\subsection{Continuity and set-valued proximal correspondences}
Continuity is essential for the passage from a single-valued proximity operator to a convex $C^1$ potential. Example~\ref{ex:discontinuous-prox} shows that a discontinuous selection from a multivalued nonconvex proximity correspondence can carry directed dependency. A systematic set-valued reciprocity theory is outside the scope of this paper.

\subsection{Finite memory}
The dynamical identifiability theorem is finite-channel. Infinite memory would require summability and a different uniqueness mechanism; recurrence prevents the separating-set argument from extending verbatim to infinitely many iterates.

\subsection{Invertibility}
The shifted-adjoint law uses $(U^k)^*=U^{-k}$. For noninvertible dynamics, the adjoint is a transfer operator rather than a reverse Koopman power. Hence the adjoint of a forward channel generally leaves the present channel family. The noninvertible problem therefore requires a different mixed Koopman--transfer representation theory and is not treated here.

\subsection{Strong ergodicity}
Corollary~\ref{cor:strong-threshold} concerns the branch-pasting class generated by two coherent branches. It does not characterize arbitrary approximate-zero sequences. Strong ergodicity is the exact threshold only within this structural class under pointwise cross-residual nondegeneracy.

\subsection{Full selection lifts}
Theorem~\ref{thm:selector-compactness} concerns the complete decomposable family of measurable selectors. Smaller families selected by regularity, monotonicity or dynamical compatibility may remain compact in the presence of fibre branching.

\subsection{Finite-p regime}
The atomic compactness argument in Theorem~\ref{thm:selector-compactness} uses finite-$p$ tail summability. The $L^\infty$ case requires a different criterion.

\subsection{Wasserstein scope}
The Wasserstein example applies the compact-selection theorem to one explicit minimizing correspondence. We do not construct a Wasserstein degree, a general random minimizing-movement theory or a new Wasserstein bifurcation invariant.

\subsection{Overall interpretation}
The paper isolates two complementary structural costs of lifting. On the variational side, global potential structure imposes time-reversal closure on finite dynamical memory: smoothly through the shifted-adjoint channel law and globally through $\operatorname{Act}(T)=-\operatorname{Act}(T)$. One-sided gradient and proximal memories therefore collapse to present-state locality. On the compactness side, measurable branching and asymptotically invariant labels generate noncompact section families. These mechanisms explain why favourable fibrewise properties cannot in general be transferred unchanged to dynamically coupled spaces of measurable sections.

\appendix

\section{Differentiability of finite-memory Nemytskii fields}
The Helmholtz theorem is intentionally formulated through the derivative representation \eqref{eq:regular-derivative}, rather than through a maximal superposition-operator theorem. A pointwise field of the form \eqref{eq:finite-memory} falls under Definition~\ref{def:regular-memory} whenever the associated Nemytskii map is $C^1$ on the section domain and its partial derivatives induce strong-operator measurable, essentially bounded multiplication fields. Standard Nemytskii differentiability results provide many sufficient conditions. No such maximal theorem is used in the proof of Theorem~\ref{thm:helmholtz}; only the displayed derivative representation is required.

\section{Selection and atomic technicalities}
We use two classical facts. First, measurable nonempty closed-valued multifunctions into Polish spaces admit Castaing representations \cite{CastaingValadier1977}. Second, the decomposition \eqref{eq:atomic-decomp} is unique modulo null sets up to reordering of atoms. On an atom, every measurable selection is a.e.\ constant by Lemma~\ref{lem:atom-constant}. If $C$ is compact-valued, the induced atomic fibre $C_j$ is compact, and the integrable envelope gives the tail estimate used in Theorem~\ref{thm:selector-compactness}.

\section{Separating sets and periodic aliasing}
The proof of Theorem~\ref{thm:identifiability} extends to any finite family $T_1,\dots,T_m$ of invertible measurable p.m.p.\ transformations satisfying
\[
 \Pp\{\omega:T_i\omega=T_j\omega\}=0\qquad(i\ne j).
\]
Then $\sum_iM_{A_i}U_{T_i}=0$ implies $A_i=0$ a.e.\ for every $i$. For a period-$q$ component the natural coefficients are residue-class aggregates, and self-adjointness pairs residue $r$ with $-r$ after the appropriate shift.

\section{Rokhlin construction for integer actions}
Let $\theta$ be aperiodic p.m.p.\ and fix $\alpha\in(0,1)$. Rokhlin towers with heights $N\to\infty$ give bases $B_N$ such that
\[
 B_N,\theta B_N,\ldots,\theta^{N-1}B_N
\]
are pairwise disjoint and their union has measure tending to one. With $m_N=\lfloor\alpha N\rfloor$ and
\[
 A_N=\bigcup_{j=0}^{m_N-1}\theta^jB_N,
\]
one has $\Pp(A_N)\to\alpha$ and
\[
 \Pp(A_N\triangle\theta A_N)\le 2\Pp(B_N)\le 2/N\to0.
\]
This is the explicit almost-invariant sequence behind Corollary~\ref{cor:Z-branch}.

\section{Wasserstein double-well calculation}
For
\[
 \varphi_\tau(x)=\frac{x^2}{2\tau}+\frac\beta4(x^2-1)^2,
\]
with $\beta\tau>1$,
\[
 \varphi_\tau'(x)=x\bigl(\tau^{-1}+\beta(x^2-1)\bigr).
\]
The critical points are $0$ and $\pm a$ with $a^2=1-(\beta\tau)^{-1}$. Moreover,
\[
 \varphi_\tau''(0)=\tau^{-1}-\beta<0,
 \qquad
 \varphi_\tau''(\pm a)=2\beta a^2>0.
\]
Since $\varphi_\tau(x)\to\infty$ as $|x|\to\infty$, $\pm a$ are the global minima. Equality in
\[
 \int\varphi_\tau\,d\nu\ge\varphi_\tau(a)
\]
requires $\operatorname{supp}\nu\subset\{-a,a\}$. Every nontrivial mixture has positive variance, whereas the two Dirac masses have variance zero. Thus the minimizing correspondence is exactly $\{\delta_{-a},\delta_a\}$.

\section{Assumption and dependency table}
\begin{center}
\small
\begin{tabular}{p{0.25\textwidth}p{0.65\textwidth}}
\toprule
Result & Essential assumptions / sharpness status\\
\midrule
Proposition~\ref{prop:variational-reciprocity} & Real Hilbert space; global $C^1$ potential. Convexity and orthogonality of the tested subspaces are unnecessary.\\
Theorem~\ref{thm:graph-classification} & Finite orthogonal block decomposition; global $C^1$ potential.\\
Theorem~\ref{thm:prox-components} & $\Phi\in\Gamma_0$; orthogonal finite component decomposition.\\
Theorem~\ref{thm:identifiability} & Finite memory family and $(\mathrm{Sep}_F)$; strong-operator measurable essentially bounded coefficients. The separation assumption is necessary and sufficient.\\
Theorem~\ref{thm:helmholtz} & Hilbert $L^2$ section space; regular finite-memory $C^1$ field; convex domain; finite-channel separation.\\
Theorem~\ref{thm:memory-reversal} & Global $C^1$ potential; continuous Carath\'eodory finite $S$-memory representation; $(\mathrm{Sep}_{F_S})$ with $F_S=\{0\}\cup S\cup(S+S)$.\\
Corollary~\ref{cor:prox-memory-reversal} & Continuous single-valued proximity operator plus finite-memory representation. Example~\ref{ex:discontinuous-prox} shows discontinuous selections can violate reciprocity.\\
Theorem~\ref{thm:selector-compactness} & Standard probability base; Polish fibres; compact measurable values; $p$-integrable envelope; $1\le p<\infty$.\\
Theorem~\ref{thm:residual-ai} & Two coherent branches; pointwise positive cross-residual lower envelope; $L^p$ upper envelope. No uniform positive lower constant is needed.\\
Corollary~\ref{cor:strong-threshold} & Finitely generated ergodic p.m.p.\ action; the preceding two-branch nondegeneracy conditions.\\
\bottomrule
\end{tabular}
\end{center}

\section*{Funding}
This work was supported in part by the National Natural Science Foundation of China [grant numbers 62276135].

\section*{Declaration of generative AI and AI-assisted technologies in the manuscript preparation process}
During the preparation of this work, the authors used ChatGPT (OpenAI) to assist with manuscript organization, language refinement, and presentation. The authors carefully reviewed and verified the mathematical arguments, results, and references, revised the content as necessary, and take full responsibility for the final manuscript.

\end{document}